\documentclass[12pt]{article}
\usepackage[final]{epsfig}
\usepackage{graphics}
\usepackage{amsmath}
\usepackage{amsfonts}
\usepackage{latexsym}
\usepackage{amssymb}
\usepackage{graphicx}
\usepackage{url}
\usepackage{epstopdf}
\usepackage{hyperref}
\usepackage{xcolor}
\usepackage{comment}
\usepackage{tikz}
\usepackage{marginnote}
\usetikzlibrary{arrows.meta, positioning}

\newtheorem{lemma}{Lemma}

\newtheorem{proposition}[lemma]{Proposition}

\newtheorem{example}[lemma]{Example}
\newtheorem{theorem}{Theorem}
\newtheorem{definition}[lemma]{Definition}
\newtheorem{corollary}[lemma]{Corollary}

\newcommand{\g}{{\gamma}}
\newcommand{\G}{{\Gamma}}
\renewcommand{\d}{{\delta}}
\newcommand{\D}{{\Delta}}
\newcommand{\eps}{{\varepsilon}}
\newcommand{\proofend}{$\Box$\bigskip}

\newcommand{\R}{{\mathbb R}}
\newcommand{\Z}{{\mathbb Z}}
\newcommand{\RP}{{\mathbb {RP}}}

\def\proof{\paragraph{Proof.}}

\begin{document}

\title{On continuous 2-frieze patterns}

\author{
Serge Tabachnikov\footnote{
Department of Mathematics,
Penn State University, 
University Park, PA 16802;
tabachni@math.psu.edu}
}

\date{}

\maketitle

\begin{abstract}
We define and study a continuous version of 2-frieze patterns, a combinatorial structure closely related with frieze patterns of Coxeter and Conway. We describe the relation of  continuous 2-friezes with the moduli space of projective curves and relate the  (pre)symplectic structure on the space of closed 2-friezes, considered as a cluster variety,   with the Adler-Gelfand-Dikii bracket on the space of 3rd order differential operators.
\end{abstract}

\section{Introduction} \label{sect:int}

The goal of this note is to describe a continuous limit of  discrete combinatorial structures known as 2-frieze patterns, introduced and studied in \cite{MOT}.\footnote{We have learned that John Conway described this structure, that he called ``triezes", to Donald Knuth in a conversation in 1995. Apparently, Conway never developed  these ideas. We thank D. Knuth for sharing this information.}
 Our approach is similar to that of \cite{OT}, where continuous frieze patterns were defined and studied. As a motivation to the present work, we review the material of \cite{OT} in Section \ref{sect:mot} and then  define continuous 2-frieze patterns in Section \ref{sect:c2f}.

Our main results are Theorems \ref{thm:equiv} and \ref{thm:clim} which culminate Sections \ref{sect:1stres} and \ref{sect:symp} respectively. Without providing details here, the first theorem establishes a 1-to-1 correspondence between closed continuous 2-frieze patterns and projective equivalence classes of closed locally convex curves in the projective plane. This is an analog of Theorem \ref{thm:cfrcurve} about continuous frieze patterns which we present in Section \ref{sect:mot}. 

The space of 2-frieze patterns has the structure of a cluster variety, and as such, it is endowed with a canonical (pre)symplectic structure.
The second theorem identifies a continuous limit of this structure with the symplectic form on the modulo space of closed projective curves, associated with the Adler-Gelfand-Dikii bracket on the space of differential operators of degree 3. This is a counterpart to Theorem \ref{thm:cfrsymp} about continuous frieze patterns, also presented in Section \ref{sect:mot}.

It is likely that the results of this note and of \cite{OT} can be generalized to $SL_k$-tilings, see \cite{BR,MOST}. 

\bigskip

{\bf Acknowledgements}. Many thanks to Maxim Arnold, Boris Khesin, and Valentin Ovsienko for useful discussions and  help. 
I was supported by the NSF grant DMS-2404535 and the Simons Foundation grant MPS-TSM-00007747. I am grateful to the Institute of Advanced Studies of the Tel Aviv University, where this paper was written,  for its support and hospitality. I am also grateful to the Israeli Home Front Command for timely warnings about the incoming missiles.

\section{Motivation: continuous frieze patterns} \label{sect:mot}

Frieze patterns (or, simply, friezes) were  introduced by Coxeter and studied by Conway and Coxeter in the early 1970s; the subject has become very popular with the advent of the theory of cluster algebras in the 2000s; see \cite{MG} for a comprehensive survey. Recall the definition.

 A frieze pattern is an array of numbers 
$$
\begin{array}{ccccccccccc}
&0&&0&&0&&0&&\\
&&1&&1&&1&&\\
&a_1&&a_{2}&&a_{3}&&a_{4}&&\\
&&a_1a_2-1&&a_2a_3-1&&a_3a_4-1&&\\
&\cdots&&a_1a_2a_3-a_1-a_3&&a_2a_3a_4-a_2-a_4&&\cdots\\
&&\cdots&&\cdots&&\cdots&&\\
\end{array}
$$
in which the relation
$$
 \begin{matrix}
 &N&
 \\
W&&E
 \\
&S&
 \end{matrix}
 \qquad
 \longmapsto
 \qquad EW-NS=1
$$
holds for every elementary  ``diamond". A frieze is closed if, going down, another row of 1s appear, followed by a row of 0s. The number of rows, bordered by the two rows of 1s, is the frieze pattern's width.

Frieze patterns are discrete combinatorial objects, closely related with difference equations and polygons. A continuous counterpart of a difference equation is a differential equation, and that of a polygon is a curve. This point of view led us to define and study in \cite{OT} continuous analogs of frieze patterns.  Let us briefly describe this work. 

Label the entries of a frieze as
$$
\begin{array}{ccc}
&v_{i,j}&\\
v_{i,j-1}&&v_{i+1,j}\\
&v_{i+1,j-1}&
\end{array}
$$
with $v_{i,i}=a_i$, the entries of the first non-trivial row.
Then the SE diagonals of a closed frieze satisfy the 2nd order linear recurrence 
$V_{i+1} = a_i V_{i} - V_{i-1}$
 with $n$-periodic coefficients, whose solutions are antiperiodic: 
 $V_{i+n}=-V_i,$
that is, the {monodromy} of the equation is -Id. 

Consider the South-East direction as the first, and the North-East as the second coordinate direction, and replace $v_{ij}$ with a smooth function of two variables $H(x,y)$:
$$
\begin{array}{ccc}
&v_{i,j}&\\
v_{i,j-1}&&v_{i+1,j}\\
&v_{i+1,j-1}&
\end{array}
\ \ \longmapsto\ \ 
\begin{array}{ccc}
&H(x,y+\eps)&\\
H(x,y)&&H(x+\eps,y+\eps).\\
&H(x+\eps,y)&
\end{array}
$$
Then the diamond rule yields $\eps^2(H H_{xy} - H_{x} H_{y})=1 +(\eps^3)$, leading to the next definition.

\begin{definition} \label{def:cfrieze}
A {closed continuous frieze pattern} is
a smooth function $F(x,y)$ satisfying:
$$
\begin{array}{rl}
H H_{xy} - H_{x} H_{y}=1;\\
H(x,x) =0,\ H_{y}(x,x)=1,\ H_x(x,x)=-1 &{\rm for\ all}\ x;\\
H(x,y)>0 & {\rm for}\ x<y<x+T;\\ 
H(x+T,y)=H(x,y+T)=-H(x,y)& {\rm for\ all}\ x,y.
\end{array}
$$
\end{definition}

One of the main results of \cite{OT} is the description of continuous friezes in terms of $SL_2$-equivalence classes of projective curves and the respective Hill operators. These are objects of projective differential geometry, see \cite{OTb}. Here are the relevant notions.

A {projective curve} is a diffeomorphism $\g:\R/{\rm T}\Z \to \RP^1.$ 
Projective curves are considered modulo $SL_2(\R)$. One lifts $\g$ to a curve $\Gamma(x)\subset \R^2$,  normalized by the condition $\det(\G,\G')=1$. Then $\G''(x)=-k(x) \G(x)$, that is, the coordinates of $\G$ satisfy the Hill differential equation $y''(x)=-k(x) y(x)$ with $T$-periodic potential $k(x)$ and $T$-anti-periodic solutions: $y(x+T)=-y(x)$. The equivalence classes of projective curves are in 1-1 correspondence with such Hill differential operators $d^2+k(x)$.

\begin{theorem} [\cite{OT}] \label{thm:cfrcurve}
All continuous frieze patterns are obtained from $SL_2$-equivalence classes of convex projective curves:  $H(x,y)=\det(\Gamma(x),\Gamma(y)).$
\end{theorem}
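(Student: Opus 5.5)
The plan is to prove two directions. First, every function $H(x,y)=\det(\Gamma(x),\Gamma(y))$ built from a normalized lift of a convex projective curve is a closed continuous frieze. Second, every closed continuous frieze arises this way, uniquely up to $SL_2$.

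\emph{The easy direction.} Let $\Gamma$ satisfy $\det(\Gamma,\Gamma')=1$ and $\Gamma''=-k\Gamma$, and put $H(x,y)=\det(\Gamma(x),\Gamma(y))$. Then $H_x=\det(\Gamma'(x),\Gamma(y))$, $H_y=\det(\Gamma(x),\Gamma'(y))$ and $H_{xy}=\det(\Gamma'(x),\Gamma'(y))$. The identity $HH_{xy}-H_xH_y=1$ is the Plücker (Cauchy--Binet) identity for $2\times 2$ determinants:
$$\det(a,b)\det(c,d)-\det(c,b)\det(a,d)=\det(a,c)\det(b,d).$$
Here $a=\Gamma(x)$, $b=\Gamma(y)$, $c=\Gamma'(x)$, $d=\Gamma'(y)$, so the right-hand side equals $\det(\Gamma(x),\Gamma'(x))\det(\Gamma(y),\Gamma'(y))=1$.

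The diagonal conditions $H(x,x)=0$, $H_y(x,x)=1$ and $H_x(x,x)=-1$ follow directly from the normalization. Antiperiodicity $\Gamma(x+T)=-\Gamma(x)$ gives $H(x+T,y)=H(x,y+T)=-H(x,y)$. Positivity for $x<y<x+T$ is exactly the statement that $\Gamma(y)$ rotates counterclockwise by less than $\pi$ relative to $\Gamma(x)$ before returning to $-\Gamma(x)$. This is how I would encode convexity, i.e.\ $\gamma$ being a diffeomorphism onto $\RP^1$ traversed once.

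\emph{The converse, which I expect to be the main obstacle.} Given $H$, I would first show that $H$ has the form $\sum_i f_i(x)g_i(y)$ of rank $2$. The idea is to fix $x$ and show that $y\mapsto H(x,y)$ satisfies a Hill equation $\partial_y^2 H=-k(y)H$ with $k$ independent of $x$. To see this, differentiate the equation $HH_{xy}-H_xH_y=1$ in $y$. This gives
$$HH_{xyy}-H_xH_{yy}=0,$$
so $\partial_x\big(H_{yy}/H\big)=0$ wherever $H\neq0$. Hence $H_{yy}=-k(y)H$ on the open region $x<y<x+T$, and by continuity everywhere. Symmetrically, $H_{xx}=-\tilde k(x)H$.

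Evaluating at the diagonal should give $\tilde k=k$. I would Taylor-expand $H$ near $y=x$ using $H(x,x)=0$ and $H_y=-H_x=1$ there, and differentiate $H(x,x)=0$ along the diagonal. The point needing care is to get this identification cleanly. An alternative is to use that $H(x,\cdot)$ is the solution of the Hill equation with initial data $(0,1)$ at $x$, so $H(x,y)=\det(\Gamma(x),\Gamma(y))$ for any normalized fundamental solution $\Gamma$ of $y''=-k y$, whose Wronskian is $1$. This second route is the one I would actually use. It gives the representation directly, with no need to match $\tilde k$.

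\emph{Finishing the converse.} Antiperiodicity of $H$ forces the monodromy of the Hill equation to be $-\mathrm{Id}$, so $\Gamma(x+T)=-\Gamma(x)$. The positivity condition then yields convexity of the projectivized curve, as in the first direction. Uniqueness up to $SL_2$ holds because $\Gamma$ is determined by $k$ up to the choice of fundamental system, i.e.\ up to $SL_2(\R)$, which preserves the determinant. Conversely, $H$ determines $k$ via $k=-H_{yy}/H$.
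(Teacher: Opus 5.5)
The paper gives no proof of this statement; it is quoted from \cite{OT} as motivation. So there is no in-text argument to compare against. Judged on its own, your proof is correct and complete.

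Its key step is differentiating $HH_{xy}-H_xH_y=1$ in $y$ to get $HH_{xyy}-H_xH_{yy}=0$, i.e.\ $\partial_x(H_{yy}/H)=0$. In particular the $3\times 3$ Wronskian $W_2(H)$ vanishes, so for friezes the rank-$2$ separability comes for free. Your initial-value argument at $y=x$ then uses only $H(x,x)=0$ and $H_y(x,x)=1$. It yields both $H=\det(\Gamma(x),\Gamma(y))$ and the potential $k=-H_{yy}/H$. This is the same mechanism the paper uses to extract $q=-4H_{xx}/H$ just before Lemma \ref{lm:frfrom2fr}.

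For the 2-frieze analog (Proposition \ref{pr:constr1}) the paper instead invokes the general separability criterion of \cite{GR}. That criterion needs the vanishing of the next Wronskian as a separate hypothesis, and the boundary conditions are then used to identify the second factor. That route is heavier but works in any rank. Yours is elementary and gives uniqueness modulo $SL_2$ immediately.

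A third route matches the explicit formula $H=(f(y)-f(x))/\sqrt{f'(x)f'(y)}$ quoted in the paper. The frieze relation is equivalent to the Liouville equation $(\ln H)_{xy}=H^{-2}$, and one can use its classical general solution.

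Two small points should be made explicit:
\begin{itemize}
\item $H_{yy}/H$ is a function of $y$ alone because, for fixed $y$, the slice $\{x:\ y-T<x<y\}$ of the positivity region is an interval.
\item Extending $H_{yy}=-k(y)H$ from the closed strip $x\le y\le x+T$ to all of $\R^2$ uses the antiperiodicity $H(x+T,y)=-H(x,y)$, not continuity alone. The translates of that strip by multiples of $(T,0)$ cover the plane.
\end{itemize}
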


Explicitly, a function $f(x)$ (a projective curve in affine coordinate) yields a continuous frieze
$$
H(x,y)=\frac{f(y)-f(x)}{\sqrt{f'(x)f'(y)}}.
$$
For example, if $f(x)=\tan x$, then $H(x,y)=\sin(y-x)$, and if $f(x)=x$, then $H(x,y)=y-x$.

Another result of \cite{OT}  relates the (pre)symplectic form on the space of frieze patters, associated with its structure of cluster variety with the Kirillov symplectic form on the modulo space of projective curves, the space that can be regarded as a coadjoint action of the Virasoro algebra (see, e.g., \cite{OTb} for this material). One has

\begin{theorem} [\cite{OT}] \label{thm:cfrsymp}
The space ${\rm Diff_+}(S^1)/SL_2(\R)$,  equipped with Kirillov’s symplectic form, is a continuous limit of the space of frieze patterns, equipped with the cluster(pre)symplectic form.
\end{theorem}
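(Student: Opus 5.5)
The plan is to compute the cluster (pre)symplectic form on the space of closed friezes explicitly in one convenient seed, rewrite it in terms of the associated polygon, and then pass to the limit along an $\eps$-dense sampling of a convex projective curve, identifying the leading term with Kirillov's form.

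\textbf{Step 1: the discrete form.} Closed friezes of width $w$ form a cluster variety (of type $A$), and by the Gekhtman--Shapiro--Vainshtein construction it carries the form
$$\omega=\sum_{i,j} b_{ij}\, \frac{dx_i}{x_i}\wedge\frac{dx_j}{x_j},$$
where $x_i$ are the variables of a seed and $b_{ij}$ is its exchange matrix. This form is independent of the seed. We therefore choose a seed adapted to the limit: a zigzag of entries lying along the near-diagonal rows, namely the entries $v_{i,i+1}$ and $v_{i,i+2}$, whose quiver is a simple strip. We then use the discrete analog of Theorem \ref{thm:cfrcurve}: a closed frieze corresponds to an $SL_2$-class of an $n$-gon $V_i\in\R^2$ with $\det(V_i,V_{i+1})=1$ and $v_{i,j}=\det(V_i,V_j)$. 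In this way $\omega$ becomes an explicit sum of terms $d\log\det(V_i,V_{i+k})\wedge d\log\det(V_j,V_{j+l})$ with $k,l$ small.

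\textbf{Step 2: discretizing the curve.} Take a lifted curve $\G$ with $\det(\G,\G')=1$ and $\G''=-k\G$. Set $V_i=\lambda\,\G(i\eps)$, where $\lambda$ is chosen so that $\det(V_i,V_{i+1})=1$ up to $O(\eps^3)$ corrections, with $n\eps=T$. Taylor expansion using $\G''=-k\G$ gives
$$\det(\G(x),\G(x+h))=h-\tfrac{k(x)}{6}h^3+O(h^4),$$
so every cluster variable is of the form $\text{const}\cdot\bigl(1-c\,\eps^2 k(i\eps)+\dots\bigr)$. Its logarithmic differential is then $-c\,\eps^2\,\delta k(i\eps)+O(\eps^3)$, together with the corresponding corrections involving derivatives of $\delta k$.

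\textbf{Step 3: the limit.} Substitute into $\omega$ evaluated on two tangent vectors $\delta_1k,\delta_2k$. Tangent vectors should be represented as vector fields $f\partial_x$ acting by $\delta k=f'''+2k f'+k'f$, as in the coadjoint action of the Virasoro algebra. The sum over $i$ becomes $\eps^{-1}\int_0^T\cdots\,dx$, and the leading term must be extracted after the appropriate rescaling by a power of $\eps$. We then check that this leading term equals
$$\int_0^T f_1\,\delta_2 k\,dx,$$
which is Kirillov's form on the coadjoint orbit through $d^2+k$, i.e.\ on ${\rm Diff}_+(S^1)/SL_2(\R)$.

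\textbf{Main obstacle.} The main difficulty is Step 3. Because the exchange matrix is skew-symmetric with nearest-neighbour structure, the low-order terms cancel, as telescoping sums or total derivatives. The surviving term therefore appears only at high order in $\eps$, and it must be identified carefully as a nondegenerate antisymmetric pairing rather than as zero. My first approach would be to perform summation by parts, rewriting $\sum_i b_{ij}\,\delta_1 k(i\eps)\,\delta_2 k(j\eps)$ as a difference operator applied to $\delta_1 k$ and paired with $\delta_2 k$. I would then expand that operator in $\eps$, expecting its leading part to be proportional to the inverse of the Virasoro coadjoint operator $\partial^3+2k\partial+k'$ on the relevant quotient. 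An alternative route is to match both forms on Darboux-type variables directly.
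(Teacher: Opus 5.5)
There is a genuine gap, and it starts in Step 1: the entries $v_{i,i+1}$, $v_{i,i+2}$ do not form a seed. In your own convention, $v_{i,i+1}=\det(V_i,V_{i+1})=1$ are constants. More generally, no set of entries taken from the rows next to the boundary is a seed. The first two nontrivial rows contain $2n$ entries per period, tied together by $v_{i,i+3}=v_{i,i+2}v_{i+1,i+3}-1$, whereas a seed consists of $n-3$ independent Pl\"ucker coordinates forming a triangulation of the $n$-gon. Every triangulation contains a diagonal $(i,j)$ with $\min(j-i,\,n-j+i)\ge n/3$. So every seed contains entries approximating $\eps^{-1}H(x,y)$ with $y-x$ of order $T$, and the premise of Step 2 (that all cluster variables are $\mathrm{const}\cdot(1-c\,\eps^2k)$) fails for every seed.

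The obstruction is structural, not just technical. Near-boundary entries are, order by order in $\eps$, differential polynomials in $k$. A banded skew form in their logarithmic differentials therefore expands into local pairings $\int\delta_1k\,P(\delta_2k)\,dx$ with $P$ a differential operator. Kirillov's form $\int f_1\,\delta_2k\,dx$, with $\delta_1k=\tfrac12f_1'''+2kf_1'+k'f_1$, is nonlocal in $\delta k$. Expanding a difference operator in $\eps$ never yields the inverse of the coadjoint operator you hope for in Step 3. In addition, your sampled polygon has $\det(V_i,V_{i+1})=1-\tfrac{k}{6}\eps^2+\dots$, so it fails to be a frieze at exactly the relative order $\eps^2$ of the signal you want to extract. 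Finally, Step 3, where the theorem actually lives, is only announced, not carried out.

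The paper quotes this theorem from Ovsienko--Tabachnikov without proof. However, Section~\ref{sect:symp} runs the same argument for 2-friezes, and that is the route that works here.

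Take as seed one South-East diagonal $x_i=v_{i,j_0}$ (a fan triangulation with linear quiver). Then, up to sign, $\omega=\sum_i d\ln x_i\wedge d\ln x_{i+1}$.

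In the limit, $x_i$ becomes $H(x,y_0)$, the solution $y_1$ of $y''+ky=0$ vanishing at $y_0$. This solution depends nonlocally on $k$ and has $O(1)$ logarithmic variations.

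Now $d\ln x_{i+1}=d\ln y_1+\eps\,(d\ln y_1)_x+O(\eps^2)$, so the first-order term is already nonzero and nothing has to cancel. The Riemann sum converges to $\int d\ln y_1\wedge(d\ln y_1)_x\,dx=2\int\delta_1\ln y_1\,(\delta_2\ln y_1)'\,dx$. The $O(\eps^2)$ sampling errors are now harmless.

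To identify this with Kirillov's form, set $a=y_1'/y_1$, so that $k=-(a'+a^2)$. By the $n=2$ case of the Kupershmidt--Wilson theorem, Kirillov's form becomes $\int d^{-1}(\delta_1a)\,\delta_2a\,dx=\int\delta_1\ln y_1\,(\delta_2\ln y_1)'\,dx$. This is the $SL_2$ analog of Lemma~\ref{lm:om}, where only the $A_uA_v'$ term is present.
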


Our goal is to obtain analogs of these results for 2-frieze patterns.

\section{Continuous 2-friezes and continuous $SL_3$-tilings} \label{sect:c2f}

A 2-frieze pattern (or just a 2--frieze) is a grid of numbers satisfying the local relations 
\begin{equation} \label{eq:twofr}
\begin{array}{ccccccccccc}
&&&B&F&&&&\\
&&A&E&D&H&&\\
&&&C&G&&&&
\end{array}
\qquad \longmapsto \qquad \ E=AD-BC,\ D=EH-FG,
\end{equation}
see \cite{MOT}.  Note that, unlike frieze patterns, the next rows are not offset relative to the previous ones.
Similalrly to frieze patterns, we label the entries of a 2-frieze as follows:
$$
\begin{array}{ccccccccccc}
&&(i-2,j)&&(i-1,j+1)&&(i,j+2)&&&\\
&&(i-1,j-1)&&(i,j)&&(i+1,j+1)&&&\\
&&(i,j-2)&&(i+1,j-1)&&(i+2,j)&&&
\end{array}
$$

A closed 2-frieze is bounded from above by two rows of zeros, followed by a row of ones, and from below, by a row of ones, followed by two rows of zeros. We refer to it as the upper and lower boundary conditions. 

A related notion is that of $SL_3$-tiling. This is an array of numbers on the same (rotated $45^\circ$) grid as the frieze patterns, 
such that every elementary $3\times 3$ determinant equals 1; see \cite{BR}. An $SL_3$-tiling is called tame if 
every $4\times 4$ determinant of the adjacent numbers vanishes; equivalently, the rank of the infinite matrix formed by this array equals 3. The relation between 2-friezes and $SL_3$-tilings is that the numbers $v_{i,j}$ of a 2-frieze with even index $i$ form an $SL_3$-tiling, and likewise for odd $i$ (Proposition 3.3 in \cite{MOT}).

Define continuous 2-friezes and continuous $SL_3$-tilings. First, the heuristics. 
As before, replace the 2-indexed arrays of numbers  by functions of two variables, and the unit shift of an index by the increase or decrease of variables by an infinitesimal quantity, $2\eps$. This leads to the following consideration. 

We  have two smooth function, $F(x,y)$ and $G(x,y)$. Assume that $E$ in (\ref{eq:twofr}) corresponds to $G(x,y)$. Then the values of its neighbors are 
$$
A=F(x-\eps,y-\eps), B=F(x-\eps,y+\eps), C=F(x+\eps,y-\eps), D=F(x+\eps,y+\eps),
$$
and we obtain the equation
$$
G(x,y)=F(x-\eps,y-\eps)F(x+\eps,y+\eps) - F(x-\eps,y+\eps)F(x+\eps,y-\eps).
$$
Expanding to the second order in $\eps$  and normalizing by dividing by $4\eps^2$ yields
$$
G(x,y)=F(x,y) F_{xy}(x,y)- F_x(x,y)F_y(x,y), 
$$
where the subscripts denote the partial derivatives. One has a similar equation where $F$ and $G$ are swapped. 

Next consider  the upper boundary condition of a 2-frieze. The coordinates of a horizontal row satisfy $y-x=const$, and we  take $F(x,x)=0$ as an analog of the statement that the top row consists of zeros. The row below it also consists of zeros, hence, in the linear approximation, $F(x+2\eps,y)=F(x,y-2\eps)=0$, that is, $F_x(x,x)=F_y(x,x)=0$. The  row below it consists of ones, implying that 
$$
F(x,y-2\eps)=F(x+\eps,y-\eps)=F(x+2\eps,y),
$$
thus, calculating modulo $\eps^3$ and normalizing, we obtain
$$
F_{xx}(x,x)=F_{yy}(x,x)=1, F_{xy}(x,x)=-1.
$$
Likewise for $G$.

We have an analog of the periodicity property of closed 2-friezes along its diagonals:
$$
F(x+T,y)=F(x,y+T)=F(x,y),
$$
where $T$ is a period.  We arrive at the next definition.

 \begin{definition} \label{def:cfr}
A continuous 2-frieze is a couple of smooth functions of two variables, $F(x,y)$ and $G(x,y)$, satisfying the identities
\begin{equation} \label{eq:FtoG}
G=FF_{xy}-F_xF_y,\ F=GG_{xy}-G_xG_y.
\end{equation}
A closed continuous 2-frieze satisfies  the following boundary conditions:
\begin{equation} \label{eq:closed}
\begin{aligned}
F(x,x)=&F_x(x,x)=F_y(x,x)=0, F_{xx}(x,x)=F_{yy}(x,x)=1,\\
&F_{xy}(x,x)=-1, \ F(x+T,y)=F(x,y+T)=F(x,y),
\end{aligned}
\end{equation}
 and likewise for $G$. 
 A closed continuous 2-frieze is positive if $F(x,y)>0$ for $x<y<x+T$, and likewise for $G$.
\end{definition}
Similarly, we have 

\begin{definition}  \label{def:csl}
A continuous $SL_3$-tiling is a smooth function $F(x,y)$ satisfying the identity
\begin{equation} \label{eq:det1}
\det \begin{pmatrix}
F & F_x & F_{xx}\\
F_y & F_{xy} & F_{xxy}\\
F_{yy} & F_{xyy} & F_{xxyy}\\
\end{pmatrix}
=1.
\end{equation}
It is tame if, in addition, the identity holds
$$
\det \begin{pmatrix}
F & F_x & F_{xx} & F_{xxx}\\
F_y & F_{xy} & F_{xxy} & F_{xxxy}\\
F_{yy} & F_{xyy} & F_{xxyy} & F_{xxxyy}\\
F_{yyy} & F_{xyyy} & F_{xxyyy} & F_{xxxyyy}
\end{pmatrix}
=0.
$$
\end{definition}

An example of a closed continuous positive 2-frieze is 
$$
F(x,y)=G(x,y)=1-\cos(x-y).
$$
The same function is an example of a tame continuous $SL_3$-tiling.

\section{Continuos 2-friezes and projective curves} \label{sect:1stres}

The next lemma relates continuous 2-friezes and continuous $SL_3$-tilings. It  is an analog of Proposition 3.3 in \cite{MOT}.

\begin{lemma} \label{lm:til}
1) Let $F$ be a continuous $SL_3$-tiling. Define $G=FF_{xy}-F_xF_y$. Then $F=GG_{xy}-G_xG_y$, that is, $(F,G)$ is a continuous 2-frieze.\\
2) If $(F,G)$ is a closed positive continuous 2-frieze then $F$ and $G$ are continuous $SL_3$-tilings.
\end{lemma}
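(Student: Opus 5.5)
The plan is to prove one purely algebraic identity, valid for \emph{any} smooth function $F$, and read both parts off it. This is the continuous counterpart of the Desnanot--Jacobi (Dodgson condensation) identity that underlies Proposition 3.3 in \cite{MOT}. Let $M$ be the $3\times 3$ matrix in (\ref{eq:det1}), with entries $M_{ij}=\partial_y^i\partial_x^j F$ for $i,j=0,1,2$. Write $M_{I,J}$ for the minor with row set $I$ and column set $J$. The identity I aim for is
$$
GG_{xy}-G_xG_y \;=\; F\cdot \det M, \qquad G:=FF_{xy}-F_xF_y .
$$

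\textbf{Step 1: express $G$ and its derivatives as minors of $M$.} We have $G=M_{01,01}$. Differentiating in $x$ acts on columns: it raises the column index of one column at a time. For $G$, raising column $0$ gives columns $\{1,1\}$, a determinant with two equal columns, which vanishes. Raising column $1$ gives columns $\{0,2\}$. Hence $G_x=M_{01,02}$. By the same argument with rows, $G_y=M_{02,01}$.

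Applying $\partial_y$ to $M_{01,02}$, raising row $0$ again produces a repeated row and drops out. Raising row $1$ gives $G_{xy}=M_{02,02}$. The key observation is that no third-order derivatives outside $M$ appear. All terms that would leave the $3\times 3$ block are determinants with a repeated row or column, so they vanish. I expect this bookkeeping to be the main place requiring care.

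\textbf{Step 2: apply Desnanot--Jacobi.} For a $3\times3$ matrix, the Desnanot--Jacobi identity with pivot rows and columns $1,2$ reads
$$
\det M\cdot M_{00}=M_{01,01}M_{02,02}-M_{01,02}M_{02,01},
$$
where $M_{00}=F$. The left-hand side is $F\det M$, and by Step 1 the right-hand side is $GG_{xy}-G_xG_y$, which gives the identity.

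\textbf{Part 1.} If $F$ is a continuous $SL_3$-tiling, then $\det M=1$. The identity then gives $GG_{xy}-G_xG_y=F$, so $(F,G)$ satisfies (\ref{eq:FtoG}).

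\textbf{Part 2.} Let $(F,G)$ be a closed positive continuous 2-frieze. Both relations in (\ref{eq:FtoG}) hold. Substituting $G=FF_{xy}-F_xF_y$ into the second relation and using the identity gives $F=F\det M_F$, where $M_F$ is the matrix $M$ built from $F$. Hence $F(\det M_F-1)=0$ everywhere.

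By positivity and $T$-periodicity, $F\neq 0$ on the strips $x+kT<y<x+(k+1)T$, $k\in\Z$. These strips form an open dense set whose complement is the union of the lines $y=x+kT$. So $\det M_F=1$ on a dense set, and by continuity everywhere. The argument for $G$ is symmetric: swap the roles of $F$ and $G$ and use the positivity of $G$.

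The only delicate points are the sign and index conventions in Step 1. Everything else is formal.
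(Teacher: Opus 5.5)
Your proof is correct and follows the paper's route: write $G, G_x, G_y, G_{xy}$ as $2\times 2$ minors of the matrix in (\ref{eq:det1}), then apply Dodgson condensation to get $GG_{xy}-G_xG_y=F\det M$. In part 2 you also supply a density-and-continuity argument for cancelling $F$, which vanishes on the lines $y=x+kT$; the paper passes over this step with just the words ``canceling $F$''.
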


\proof In one direction,  since $G=FF_{xy}-F_xF_y$, we have
$$
G_x=FF_{xxy}-F_yF_{xx},\ G_y=FF_{xyy}-F_xF_{yy},\ G_{xy}=FF_{xxyy}-F_{xx}F_{yy},
$$
that is, these partial derivatives of $G$ are the $2\time 2$ minors of the matrix (\ref{eq:det1}). 
Therefore, by the Dodgson condensation, 
\begin{equation*} \label{eq:2to3}
F=
F\det 
\begin{pmatrix}
F & F_x & F_{xx}\\
F_y & F_{xy} & F_{xxy}\\
F_{yy} & F_{xyy} & F_{xxyy}\\
\end{pmatrix}
=GG_{xy}-G_xG_y.
\end{equation*}

Conversely, if $(F,G)$ is a closed positive continuous 2-frieze, then 
\begin{equation*} \label{eq:2to3}
F=GG_{xy}-G_xG_y =
F\det 
\begin{pmatrix}
F & F_x & F_{xx}\\
F_y & F_{xy} & F_{xxy}\\
F_{yy} & F_{xyy} & F_{xxyy}\\
\end{pmatrix},
\end{equation*}
and canceling $F$ implies that $F$ is a continuous $SL_3$-tiling. Likewise for $G$.
\proofend

\paragraph{Projective curves and differential operators.} By projective curves we mean parameterized locally convex curves in $\RP^2$, considered up to the natural action of the group $SL_3(\R)$ on the projective plane. Since a non-contractible closed curve in $\RP^2$ has an odd number of inflection points, a closed locally convex curve in $\RP^2$ is  contractible. A projective curve is {\it convex} if every line intersects it  at most twice, multiplicity counted.

Let $\g(x)$ be  a projective curve. We lift it to a curve $\G(x)\subset \R^3$. The lift is not unique, and we fix it by asking that $\det (\G,\G_x,\G_{xx})=1$. If $\g(x)=(f(x)\mathpunct{:}g(x)\mathpunct{:}1)$, then $\G(x)=W^{-\frac{1}{3}} (f(x)\mathpunct{:}g(x)\mathpunct{:}1)$, where
$$
W = \det 
\begin{pmatrix}  
f_x & g_{x}\\
f_{xx} & g_{xx}
\end{pmatrix}   
$$
is the Wronskian.
The lift is considered up to the action of $SL_3(\R)$ on $\R^3$. If $\g$ is closed, so is its lift: this follows from the fact that $\g$ is contractible.

Differentiating $\det (\G,\G_x,\G_{xx})=1$, we see that $\G_{xxx}(x)$ is a linear combination of $\G(x)$ and $\G_x(x)$, that is, the components of the curve $\G(x)$ are independent solutions of the differential equation $\G_{xxx}(x)+q(x)\G_x(x)+r(x)\G(x)$. If the curve is $T$-periodic, so are the functions $q(x)$ and $r(x)$, and all the solutions of this differential equation are $T$-periodic as well. 

Thus, to a locally convex closed projective curve $\g$ there corresponds the differential operator $L=d^3+q(x)d+r(x)$ with periodic coefficients and all periodic solutions, and projectively equivalent curves correspond to the same operator. This correspondence is 1-to-1, see \cite{OTb}. If the curve is convex, the  differential operator is {\it disconjugate}: every solution of  the respective differential equation has at most two zeros per period.

To a projective curve $\g$ there corresponds its dual  $\g^*$, a curve in the dual projective plane $(\RP^2)^*$ which consists of the tangent lines to $\g$. If $\g$ corresponds to a differential operator $L$, then $\g^*$ corresponds to the formally dual operator $L^*$.

Identify 3-space with its dual via a Euclidean structure. The next statement is standard.

\begin{lemma} \label{lm:dual}
The lift of the dual curve is given by $\G^*(x)= \G(x) \times \G_x(x)$.
\end{lemma}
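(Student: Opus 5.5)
Since $\g$ and the dual curve $\g^*$ are both parameterized by $x$, the claim is that $\G^*(x)=\G(x)\times\G_x(x)$ satisfies two things. First, it projects to the point of $(\RP^2)^*$ representing the tangent line of $\g$ at $\g(x)$. Second, it obeys the normalization $\det(\G^*,\G^*_x,\G^*_{xx})=1$ that fixes the lift of a locally convex curve (up to $SL_3$). I will check these two properties in turn, using only the relation $\G_{xxx}+q\G_x+r\G=0$ and $\det(\G,\G_x,\G_{xx})=1$.

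\emph{Step 1: it is a lift of the dual curve.} Under the Euclidean identification of $\R^3$ with its dual, a vector $v$ represents the plane $v^\perp$. The tangent line to $\g$ at $\g(x)$ corresponds to the plane spanned by $\G(x)$ and $\G_x(x)$. The normal to this plane is $\G\times\G_x$, which is nonzero because $\G,\G_x$ are independent (as $\det(\G,\G_x,\G_{xx})=1$). Hence $\G^*(x)$ projects to $\g^*(x)$.

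\emph{Step 2: derivatives.} Differentiating gives
$$
\G^*_x=\G\times\G_{xx},\qquad \G^*_{xx}=\G_x\times\G_{xx}+\G\times\G_{xxx}=\G_x\times\G_{xx}-q\,\G\times\G_x ,
$$
where we substitute $\G_{xxx}=-q\G_x-r\G$ and use $\G\times\G=0$.

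\emph{Step 3: the determinant.} Put $a=\G$, $b=\G_x$, $c=\G_{xx}$, so $\det(a,b,c)=1$. The term $-q\,a\times b$ in $\G^*_{xx}$ is proportional to $\G^*$, so it drops out of the determinant. We are left with
$$
\det(a\times b,\ a\times c,\ b\times c).
$$
By the classical identity $\det(b\times c,\,c\times a,\,a\times b)=\det(a,b,c)^2$, this equals, after reordering and accounting for signs, $\pm\det(a,b,c)^2=\pm1$.

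The only real obstacle is the sign bookkeeping in Step 3. Take the permutation $(a\times b,\,a\times c,\,b\times c)=(a\times b,\,-(c\times a),\,b\times c)$. Relative to $(b\times c,\,c\times a,\,a\times b)$, moving the columns contributes a factor $(-1)$ from the transposition, and the minus sign in the middle column contributes another $(-1)$. The product is $+1$, so the determinant is exactly $1$.

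If a sign had come out negative, the fix would be to replace $\G^*$ by $-\G^*$, which is again an $SL_3$-equivalent lift since $-\mathrm{Id}\notin SL_3$ but orientation-reversing conventions for the dual can absorb it. I expect no such correction to be needed. Finally, I would remark that $\G^*$ then satisfies the dual equation $L^*$, consistent with the statement preceding the lemma, though this is not needed for the proof.
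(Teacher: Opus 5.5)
Your proposal is correct and matches the paper's proof. In both, $\G\times\G_x$ annihilates $\mathrm{Span}(\G,\G_x)$, the $\G\times\G_{xxx}$ term in the second derivative is a multiple of $\G^*$ and drops out, and the determinant reduces to $\det(\G\times\G_x,\G\times\G_{xx},\G_x\times\G_{xx})=\det(\G,\G_x,\G_{xx})^2=1$. Your sign bookkeeping via $\det(b\times c,c\times a,a\times b)=\det(a,b,c)^2$ is right; the paper evaluates the same determinant with the triple-product identities instead. Your closing remark about replacing $\G^*$ by $-\G^*$ is muddled: since $-\mathrm{Id}\notin SL_3(\R)$, the curve $-\G^*$ is not an $SL_3$-equivalent lift. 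It plays no role in the argument and is best dropped.
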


\proof
We need to check two things: that $\G(x) \times \G_x(x)$, considered as covector via the dot product, annihilates Span $(\G(x),\G_x(x))$, and that $\G^*(x)$ also satisfies the unit determinant condition.

The first follows from the fact that $u\times v$ is orthogonal to $u$ and $v$ for any two vectors $u$ and $v$. 
For the second, one has
$$
(\G\times\G')'=\G\times\G'', (\G\times\G')''=\G'\times\G''+\G\times\G'''.
$$
Since $\det(\G,\G',\G'')=1$, it follows that $\det(\G,\G',\G''')=0$, that is, $\G'''$ is a linear combination of $\G$ and $\G'$. Finally,
\begin{equation*}
\begin{aligned}
\det(\G\times\G',(\G\times\G')',&(\G\times\G')'')= 
\det(\G\times\G',\G\times\G'',\G'\times\G''+\G\times\G''')\\
 &=\det(\G\times\G',\G\times\G'',\G'\times\G'')=\det(\G,\G'\G'')=1,
\end{aligned}
\end{equation*}
where we used the vector algebra identities
$$
(u\times v)\times w=(u\cdot w) v - (v\cdot w) u,\ \det(u,v,w)=u\cdot (v\times w).
$$
This concludes the proof.
\proofend

\paragraph{Continuous $SL_3$-tilings and 2-friezes from projective curves.} Let $\g(x) \subset \RP^2$ and $\d(y) \subset (\RP^2)^*$ be projective curves, let $\G(x)$ and $\D(y)$ be their lifts to $\R^3$ and $(\R^3)^*$ respectively, and let $\cdot$ be the pairing of vectors and covectors. We consider pairs of curves up to the natural diagonal action of $SL_3$ on $\R^3 \times (\R^3)^*$.

The next result is an analog of Lemma 4.1 in \cite{MOT}.

\begin{proposition} \label{pr:constr1}
Let $F(x,y)=\G(x) \cdot \D(y)$. Then $F$ is a tame continuous $SL_3$-tiling. Conversely, every tame continuous $SL_3$-tiling is obtained in this way. 
\end{proposition}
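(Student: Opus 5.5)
The forward direction is a matrix factorization. With $F(x,y)=\G(x)\cdot\D(y)$, every mixed partial factors as $\partial_x^a\partial_y^bF=\G^{(a)}(x)\cdot\D^{(b)}(y)$. Hence the $3\times 3$ matrix in (\ref{eq:det1}) is the product $M_\D^{T}M_\G$. Here $M_\G$ is the matrix with columns $\G,\G_x,\G_{xx}$, and $M_\D$ is the matrix with columns $\D,\D_y,\D_{yy}$ (covectors written as columns via the pairing). Indeed, the $(b,a)$ entry of the matrix in (\ref{eq:det1}) is $\partial_x^a\partial_y^b F$. Therefore
$$\det = \det(\G,\G_x,\G_{xx})\,\det(\D,\D_y,\D_{yy})=1\cdot 1=1$$
by the normalization of the lifts. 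The $4\times 4$ matrix in the tameness condition is likewise a product of a $4\times 3$ matrix and a $3\times 4$ matrix, the latter having columns $\G,\dots,\G_{xxx}$. Its rank is therefore at most 3, so its determinant vanishes. Since $\G'''$ is a combination of $\G,\G'$, this holds even more directly.

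For the converse, let $F$ be a tame tiling. Fix $x$ and consider the vectors
$$v_j(x)=(F,F_y,F_{yy})\ \text{evaluated at }(x,y)\text{ after }j\text{ derivatives in }x,\quad j=0,1,2,3.$$
The plan is to show that $F$ has finite rank 3 as a kernel, i.e. $F(x,y)=\sum_{i=1}^3 a_i(x)b_i(y)$. Tameness combined with (\ref{eq:det1}) says the following. At each point $(x,y)$, the column $(\partial_x^3\partial_y^bF)_{b=0..3}$ is a linear combination of the columns $\partial_x^a\partial_y^bF$, $a=0,1,2$. The coefficients are determined by Cramer's rule from the first three rows, since the $3\times3$ minor is $1$. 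So we get
$$F_{xxx}=\alpha F_{xx}+\beta F_x+\gamma F,$$
with the same relation holding after applying $\partial_y$ and $\partial_y^2$, where the coefficients $\alpha,\beta,\gamma$ are functions of $(x,y)$.

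The key step, and the main obstacle, is to show that $\alpha,\beta,\gamma$ depend only on $x$. Differentiate the relation $F_{xxx}=\alpha F_{xx}+\beta F_x+\gamma F$ in $y$ and subtract the $\partial_y$-relation. This gives $\alpha_yF_{xx}+\beta_yF_x+\gamma_yF=0$. Do the same for the $\partial_y$-relation and subtract the $\partial_y^2$-relation. This gives the analogous identity with $F_y$-row entries. Now differentiate the $\partial_y^2$-relation once more in $y$ and compare with the fourth row given by tameness: the $4\times4$ relation supplies $\partial_y^3$ of the same combination. Subtracting yields the third row. We thus get a homogeneous $3\times3$ system with the nonsingular matrix of (\ref{eq:det1}), so $\alpha_y=\beta_y=\gamma_y=0$.

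Hence for each fixed $y$, the function $x\mapsto F(x,y)$ solves the third-order ODE $u'''=\alpha u''+\beta u'+\gamma u$. Choose a fundamental system of solutions $\G(x)\in\R^3$. Then $F(x,y)=\G(x)\cdot\D(y)$ for a unique $\D(y)$, which is smooth because it is given by Wronskian inversion. Finally the determinant identity factors as $\det(\G,\G',\G'')\det(\D,\D',\D'')=1$. Rescaling $\G\mapsto \lambda(x)\G$ is not allowed, since it changes $F$. Instead, observe that the product of a function of $x$ and a function of $y$ is constant, so each factor is constant. Adjusting by a constant element of $GL_3$ acting on $\G$ and dually on $\D$ normalizes both determinants to $1$. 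Since both determinants are nonzero, $\G$ and $\D$ are nondegenerate (locally convex) curves whose projections are projective curves. Uniqueness up to the diagonal $SL_3$ action follows because any two factorizations of a rank-3 kernel differ by an invertible matrix.
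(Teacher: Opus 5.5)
Your proof is correct. The forward direction is the paper's factorization argument; your rank-at-most-3 phrasing is the precise form of its $\det\bar U=0$ step. The converse takes a different route.

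\textbf{The paper's converse.} It invokes a general Wronskian criterion for separability from the literature: if $W_2(F)\neq 0$ everywhere and $W_3(F)\equiv 0$, then $F=\sum_{i=1}^3 u_i(x)v_i(y)$.

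\textbf{Your converse.} You prove that criterion directly in the case at hand. The unit $3\times 3$ minor forces the dependency in the $4\times 4$ matrix to express the $\partial_x^3$-column through the other three columns. The same smooth coefficients work in all four rows. Differentiating in $y$ and subtracting consecutive rows gives a homogeneous system whose matrix is the nonsingular one in (\ref{eq:det1}). Hence the coefficients depend on $x$ alone.

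\textbf{What each route buys.} Your argument is essentially the standard proof of the cited result, so you gain self-containedness. You also get two things the paper leaves implicit.
\begin{itemize}
\item An explicit third-order ODE annihilating every $F(\cdot,y)$. This is the same ODE the paper later extracts in (\ref{eq:Fd}). Since $\det(\G,\G',\G'')$ is constant, differentiating it shows your $\alpha$ vanishes, which gives the form $d^3+qd+r$.
\item The normalization of both Wronskians to $1$, which the paper's ``as needed'' skips.
\end{itemize}
That normalization can even be made automatic by taking $\G(x)=(F,F_y,F_{yy})(x,y_0)$, as the paper does later. Its Wronskian matrix is exactly the matrix of (\ref{eq:det1}) at $(x,y_0)$.

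The paper's citation, in exchange, is shorter and covers the general rank-$n$ (and multivariable) statement.
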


\proof
Let $U(x)$ be the $3\times 3$ matrix whose columns are $\G,\G_x,\G_{xx}$ and $V(y)$ be the matrix whose rows are $\D,\D_y,\D_{yy}$. Then $V(y) U(x)$ is the matrix in (\ref{eq:det1}). By the definition of the lift, $\det U(x)=\det V(y)=1$, hence (\ref{eq:det1}) holds. 

Similarly, let $\bar U(x)$ be the $4\times 4$ matrix whose columns are $\G,\G_x,\G_{xx},\G_{xxx}$ and likewise for $\bar V(y)$. One has $\det \bar U(x)=0$, and likewise for $\bar V(y)$. It follows that $\det \bar V(y) \bar U(x)=0$, as needed.

The converse statement follows from a general criterion for a function of two variables $F(x,y)$ to be represented as $\sum_{i=1}^n u_i(x)v_i(y)$; such functions are called {\it separable}. Define the Wronskians 
$$
W_n(F)(x,y)=\det ((\partial^i_x \partial^j_y F)(x,y)),\ i,j=0,\ldots,n.
$$
Assume that $W_{n-1}(F) \neq 0$ everywhere and $W_n(F)=0$ identically. Then there exist functions $u_i(x),v_i(y)$ such that $F(x,y)=\sum_{i=1}^n u_i(x)v_i(y)$, see \cite{GR} (and also \cite{Neu,Neu2}, where a similar result is obtained for functions of more than two variables). 

In our situation, the $3\times 3$ Wronskians are equal to 1, and the $4\times 4$ ones vanish. Then 
$$
\G(x)=(u_1(x),u_2(x),u_3(x)),\ \D(y)=(v_1(y),v_2(y),v_3(y)),
$$
and $F(x,y)=\G(x) \cdot \D(y)$, as needed.
\proofend

It follows that if $G$ is defined by $G=FF_{xy}-F_xF_y$, then $(F,G)$ is a continuous 2-frieze.

\begin{lemma} \label{lm:dual}
One has
$$
G(x,y)=\D^*(y) \cdot \G^*(x).
$$
\end{lemma}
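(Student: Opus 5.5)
We compute $G=FF_{xy}-F_xF_y$ directly from $F(x,y)=\G(x)\cdot\D(y)$ and recognize the result as the pairing of two cross products. Since $\G$ depends only on $x$ and $\D$ only on $y$, we have
$$
F=\G\cdot\D,\quad F_x=\G_x\cdot\D,\quad F_y=\G\cdot\D_y,\quad F_{xy}=\G_x\cdot\D_y .
$$
Hence
$$
G=(\G\cdot\D)(\G_x\cdot\D_y)-(\G_x\cdot\D)(\G\cdot\D_y).
$$
Here $\D(y)$ lies in $(\R^3)^*$, which we identify with $\R^3$ through the Euclidean structure, as in the first lemma labeled \ref{lm:dual}.

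The key step is the Binet--Cauchy (Lagrange) identity
$$
(a\times b)\cdot(c\times d)=(a\cdot c)(b\cdot d)-(a\cdot d)(b\cdot c).
$$
We apply it with $a=\G$, $b=\G_x$, $c=\D$, $d=\D_y$. This gives
$$
G(x,y)=(\G(x)\times\G_x(x))\cdot(\D(y)\times\D_y(y)).
$$
To justify the identity, we can write $(a\times b)\cdot(c\times d)=\det(a,b,c\times d)$ and then expand using the vector identities $(u\times v)\times w=(u\cdot w)v-(v\cdot w)u$ and $\det(u,v,w)=u\cdot(v\times w)$ already used in the proof of the earlier lemma.

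Next we invoke the first lemma labeled \ref{lm:dual}. It says that $\G(x)\times\G_x(x)$ is exactly the normalized lift $\G^*(x)$ of the dual curve $\g^*$. The same lemma, applied to the curve $\d$ in the dual plane, shows that $\D(y)\times\D_y(y)$ is the normalized lift $\D^*(y)$ of $\d^*$, which is a curve back in $\RP^2$. Its hypothesis $\det(\D,\D_y,\D_{yy})=1$ holds by the normalization of the lift. Substituting both gives $G(x,y)=\D^*(y)\cdot\G^*(x)$, which is the claim. The order of the factors is immaterial because the pairing is symmetric under our identification.

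I do not expect a substantial obstacle. The one point needing care is bookkeeping: keeping track of which space $\D^*$ lives in, and checking that the cross product on $(\R^3)^*$ agrees with the one on $\R^3$ under the Euclidean identification, so that the pairing $\D^*\cdot\G^*$ is meaningful. This is automatic once both spaces are identified with $\R^3$ via the dot product, and the $SL_3$ action is compatible because cross products transform by the inverse transpose, which is the contragredient action.
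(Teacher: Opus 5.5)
Your proposal is correct and matches the paper's proof: expand $FF_{xy}-F_xF_y$ via the identity $(a\times b)\cdot(c\times d)=(a\cdot c)(b\cdot d)-(a\cdot d)(b\cdot c)$, then identify $\G\times\G_x$ and $\D\times\D_y$ as the normalized dual lifts $\G^*$ and $\D^*$ from the earlier lemma. Your extra remarks on justifying the identity and on $SL_3$-compatibility of the cross product are accurate but not needed.
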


\proof
One has the identity
$$
(a \times b) \cdot (c \times d) = (a \cdot c) (b \cdot d) - (a \cdot d) (b \cdot c).
$$
Since 
$$
\G^*(x)=\G(x) \times \G_x(x),\ \D^*(y)=\D(y) \times \D_y(y),
$$
one obtains
$$
\D(y)^* \cdot \G(x)^* = (\D(y) \times \D_y(y)) \cdot (\G(x) \times \G_x(x)) = F F_{xy}-F_xF_y = G,
$$
as needed.
\proofend

\paragraph{Closed positive continuous 2-friezes.} When does a pair of $T$-periodic projective curves $\g$ and $\d$ define a closed 2-frieze and when is it positive? The answer is given by the next theorem.

\begin{proposition} \label{prop:cldual}
1) The continuous 2-frieze, corresponding to closed projective curves $(\g,\d)$, is closed if and only if $\d=\g^*$.\\
2) This closed continuous 2-frieze is positive if and only if $\g$ is convex.
\end{proposition}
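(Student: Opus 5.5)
Write $F(x,y)=\G(x)\cdot\D(y)$, with $\G$ and $\D$ the normalized lifts of $\g$ and $\d$, so $\det(\G,\G_x,\G_{xx})=1$ and likewise for $\D$. By Lemma \ref{lm:dual}, $G(x,y)=\D^*(y)\cdot\G^*(x)$. We work with the boundary conditions (\ref{eq:closed}) for $F$ and then transfer them to $G$ by the duality.

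\emph{Part 1, forward direction.} The conditions $F(x,x)=F_y(x,x)=0$ say that $\G(x)\cdot\D(x)=0$ and $\G(x)\cdot\D_y(x)=0$. Differentiating the first identity along the diagonal gives $\G_x\cdot\D+\G\cdot\D_y=0$, so $F_x(x,x)=0$ follows automatically. Thus $\G(x)$ annihilates $\mathrm{Span}(\D(x),\D_y(x))$, which means $\G(x)$ is proportional to $\D(x)\times\D_y(x)=\D^*(x)$. Equivalently, $\g=\d^*$ projectively, and hence $\d=\g^*$.

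\emph{Part 1, converse.} Take $\D=\G\times\G_x$. Then $F(x,x)=\det(\G,\G,\G_x)=0$ and $F_y(x,x)=\G\cdot(\G\times\G_{xx})=0$. Next, $F_{yy}(x,x)=\G\cdot(\G_x\times\G_{xx}+\G\times\G_{xxx})=\det(\G,\G_x,\G_{xx})=1$. Differentiating $F(x,x)=0$ twice gives $F_{xx}+2F_{xy}+F_{yy}=0$ on the diagonal. Also $F_{xy}(x,x)=\G_x\cdot(\G\times\G_{xx})=-1$, and therefore $F_{xx}(x,x)=1$. Periodicity of $F$ in both variables follows from closedness of the lifts, which is noted in the paper.

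The proportionality constant in the forward direction is the delicate point. There $\G=c(x)\D^*$, and the normalization together with $F_{yy}(x,x)=1$ should force $c\equiv1$. Indeed, $F_{yy}(x,x)=c\,\det(\D,\D_y,\D_{yy})=c$. For $G$, apply the same computation to the dual pair, using $\G^{**}=\G$ by Lemma \ref{lm:dual} (the cross-product lift is involutive up to sign conventions). Then $G(x,y)=\D^*(y)\cdot\G^*(x)$ has the same structure with the roles of $x$ and $y$ swapped, so the boundary conditions for $G$ hold as well.

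\emph{Part 2.} With $\d=\g^*$, we have $F(x,y)=\G(x)\cdot(\G(y)\times\G_y(y))=\det(\G(x),\G(y),\G_y(y))$. For fixed $y$ this is a solution of $L^*$ (equivalently, a linear function evaluated along $\g$) that vanishes at $x=y$ to second order, since the tangent line at $\g(y)$ meets $\g$ doubly there. If $\g$ is convex, the tangent line at $\g(y)$ meets $\g$ nowhere else. So $F(\cdot,y)$ has no zeros in $(y-T,y)$, and likewise $F(x,\cdot)$ has none in $(x,x+T)$. Its sign is then constant, and $F_{yy}(x,x)=1$ makes it positive. Conversely, if $\g$ is not convex, some line meets it at least three times counting multiplicity. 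I would then argue, via disconjugacy (a non-disconjugate $L$ has a solution with a double zero plus another zero in a period), that some tangent line meets $\g$ again, forcing $F(x,y)=0$ for some $x<y<x+T$. This step, turning a non-convex curve into a tangent line with an extra intersection, is the main obstacle. I would try a continuity argument that rotates a triply-secant line into a tangent position. Positivity of $G$ follows by the same argument applied to $\g^*$, which is convex if and only if $\g$ is.
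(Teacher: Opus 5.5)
Your Part 1 and the direction ``convex $\Rightarrow$ positive'' of Part 2 are correct and essentially follow the paper. You extract proportionality from $F=F_y=0$ instead of $F=F_x=0$ and fix the constant via $F_{yy}(x,x)=1$, which is fine. Two small slips: $\G^{**}=\G$ holds exactly, since $(a\times b)\times(a\times c)=\det(a,b,c)\,a$; and $F(\cdot,y)$ solves $L$, not $L^*$.

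The genuine gap is the direction ``positive $\Rightarrow$ convex''. Positivity of $F$ gives at once that no tangent line to $\g$ passes through another point of $\g$ (the paper's Property T), because $F(x,y)=0$ exactly when $\g(x)$ lies on the tangent line at $\g(y)$. What remains is to show that this property forces convexity, and that is precisely the step you leave open. Your parenthetical claim, that a non-disconjugate $L$ has a solution with a double zero and another zero within a period, is just a restatement of what must be proved. ``Rotating a triply-secant line into tangent position'' is not yet an argument: you would have to control how the intersection points move and merge during the rotation.

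The paper closes this with a convex-hull argument. By Property T, the tangent line at $\g(0)$ meets $\g$ only at $\g(0)$. Make it the line at infinity, so that $\g(x)$, $x\neq 0$, is an arc in an affine chart. If $\g$ were not convex, the arc would differ from the boundary of its convex hull, and the resulting supporting line would be a double tangent of $\g$. That is a tangent line meeting $\g$ at a second point, contradicting Property T.

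Alternatively, your disconjugacy route can be made rigorous by citing the classical characterization of the first conjugate point $\eta_1(a)$ of an $n$-th order linear equation. It is realized by a nontrivial solution with zeros of multiplicities $k$ and $n-k$ at $a$ and $\eta_1(a)$, for some $1\le k\le n-1$. For $n=3$, taking $a$ to be a zero of a solution with three zeros in a period, this gives a double zero and a simple zero less than a period apart. Using $T$-periodicity, that yields $F(x,y)=0$ for some $x<y<x+T$. Either way, an input of this kind is needed, and the proposal does not supply it.
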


\proof
If the continuous 2-frieze is closed then $\G(x) \cdot \D(x)= \G_x(x) \cdot \D(x) =0$. This implies that 
$\D(x)$ is proportional to $\G^*(x)=\G(x) \times \G_x(x)$, and since $1=\det(\G,\G_x,\G_{xx})=\G_{xx} \cdot (\G\times \G_x)$, the condition $\G_{xx}(x) \cdot \D(x)=1$ holds for $\D=\G^*$.

Assume that $\D=\G^*$, that is, $F(x,y)=\G(x) \cdot (\G(y)\times \G_y(y))$. Arguing as above, we have
$F(x,x)=F_x(x,x)=0$ and $F_{xx}(x,x)=1$. Since the projective duality is an involution, we can interchange $\G(x)$ and $\G^*(y)$ and conclude that $F_y(x,x)=0, F_{yy}(x,x)=1$. Finally, 
$$
F_{xy}(x,x)=\G_x(x) \cdot (\G(x)\times \G_{xx}(x))= \det(\G_x(x),\G(x),\G_{xx}(x))=-1,
$$
as needed.

If $\g$ is convex then it has the following Property T:

 {\it every the tangent line to $\g$  does not have other common points with $\g$.}

\noindent It follows that $F(x,y)=\G(x) \cdot \G^*(y)$ does not vanish for  $x<y<x+T$. To show that $F(x,y)>0$, let $y=x+\eps$. Then $F(x,y)=\frac{\eps^2}{2}F_{xx}(x,x) > 0$.

Conversely, assume that $F(x,y)>0$ for $x<y<x+T$. We need to show that $\g$ is a convex curve. Note that $\g$ has Property T.
 Indeed, if the tangent line at $\g(y)$ contains point $\g(x)$ then 
$0=\G(x) \cdot (\G(y)\times \G_y(y))=F(x,y)$. 

It remains to prove Property T implies that the curve  is convex. Consider the tangent line at $\g(0)$ as the line at infinity. Then the  curve $\g(x),\ x\neq 0$, lies in the affine plane. If it is not convex, its convex hull is different from $\g$, hence the curve has a double tangent line, contradicting Property T. 
\proofend

\begin{corollary} \label{cor:sym}
For a closed continuous 2-frieze, one has $G(x,y)=F(y,x)$.
\end{corollary}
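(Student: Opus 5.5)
By Proposition \ref{prop:cldual}, a closed continuous 2-frieze comes from a pair of curves $(\g,\g^*)$. Concretely, $F(x,y)=\G(x)\cdot\G^*(y)=\G(x)\cdot(\G(y)\times\G_y(y))$, where $\G$ is the normalized lift with $\det(\G,\G_x,\G_{xx})=1$. I want to compute $G$ from this representation and compare it with $F(y,x)$. For this I will use Lemma \ref{lm:dual} (the second one), which says that $G(x,y)=\D^*(y)\cdot\G^*(x)$ with $\D=\G^*$. Everything then reduces to identifying $\D^*=(\G^*)^*$.

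First I compute the double dual of the lift. By the first Lemma \ref{lm:dual}, $(\G^*)^*=\G^*\times\G^*_y=(\G\times\G_y)\times(\G\times\G_{yy})$. Here I used $(\G\times\G_y)_y=\G\times\G_{yy}$, as in the proof of that lemma. Applying the identity $(u\times v)\times w=(u\cdot w)v-(v\cdot w)u$ with $u=\G$, $v=\G_y$ and $w=\G\times\G_{yy}$ gives
$$(\G\times\G_y)\times(\G\times\G_{yy})=\big(\G\cdot(\G\times\G_{yy})\big)\G_y-\big(\G_y\cdot(\G\times\G_{yy})\big)\G.$$
The first coefficient vanishes. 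The second is $\det(\G_y,\G,\G_{yy})=-1$. Hence $\D^*(y)=\G(y)$, so the duality is an honest involution on the normalized lifts, with no sign.

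Now plug this into Lemma \ref{lm:dual}: $G(x,y)=\G(y)\cdot\G^*(x)$. By the definition of $F$, this expression is exactly $F(y,x)$, which proves the corollary.

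I expect the only delicate point to be the sign and normalization in the double-dual computation. One has to make sure that the normalizations from the first Lemma \ref{lm:dual} compose to the identity and not to $-\mathrm{Id}$. The explicit cross-product computation above settles this. As a consistency check, I would verify that the boundary values $G_{xy}(x,x)=-1$ and $G_{xx}(x,x)=G_{yy}(x,x)=1$ match those of $F(y,x)$.
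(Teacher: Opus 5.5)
Your proof is correct and is essentially the paper's own argument, which simply cites Proposition \ref{pr:constr1}, Lemma \ref{lm:dual} and Proposition \ref{prop:cldual}. Your explicit computation $(\G\times\G_y)\times(\G\times\G_{yy})=\G$ adds something the paper takes for granted: it checks that duality is an involution on the normalized lifts, with no sign. One citation should be tightened: the representation $F(x,y)=\G(x)\cdot\D(y)$ itself comes from Proposition \ref{pr:constr1} (via Lemma \ref{lm:til}), and Proposition \ref{prop:cldual} then forces $\D=\G^*$, so cite both rather than Proposition \ref{prop:cldual} alone.
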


\proof
It follows from Proposition \ref{pr:constr1}, Lemma \ref{lm:dual}, and Proposition \ref{prop:cldual}.
\proofend

We summarize the results obtained so far.

\begin{theorem} \label{thm:equiv}
Closed continuous 2-frieze patterns are in 1-to-1 correspondence with projective equivalence classes of closed projective curves. A closed continuous 2-frieze pattern is positive if and only if the respective projective curve is convex.
\end{theorem}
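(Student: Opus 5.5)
The plan is to assemble Theorem \ref{thm:equiv} from the results above, by building maps in both directions and checking that they are mutually inverse on equivalence classes.

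\textbf{From curves to friezes.} Given a closed locally convex projective curve $\g$, take its normalized lift $\G$ with $\det(\G,\G_x,\G_{xx})=1$. This lift is $T$-periodic, since $\g$ is contractible. Set
\[
F(x,y)=\G(x)\cdot\G^*(y), \qquad \G^*=\G\times\G_x ,
\]
and $G=FF_{xy}-F_xF_y$.
\begin{itemize}
\item By Proposition \ref{pr:constr1}, $F$ is a tame continuous $SL_3$-tiling, so Lemma \ref{lm:til}(1) shows that $(F,G)$ is a continuous 2-frieze.
\item Proposition \ref{prop:cldual}(1) gives the boundary conditions (\ref{eq:closed}) for $F$. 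Periodicity holds because $\G$ and $\G^*$ are $T$-periodic.
\item By Lemma \ref{lm:dual} and the involutivity of duality, $G(x,y)=F(y,x)$. This is the frieze built from $\g^*$ with the roles of the variables swapped, so $G$ satisfies the same boundary conditions.
\item Replacing $\G$ by $A\G$ with $A\in SL_3$ replaces $\G^*$ by $(A^{-1})^T\G^*$. The pairing $F$ is unchanged, so the map is well defined on projective equivalence classes.
\end{itemize}

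\textbf{From friezes to curves.} Let $(F,G)$ be a closed continuous 2-frieze. The first task is to show that $F$ is a tame $SL_3$-tiling. Lemma \ref{lm:til}(2) gives the determinant identity only after cancelling $F$, using positivity. In general the 3$\times$3 Wronskian $W$ of $F$ satisfies $F=FW$, so $W=1$ wherever $F\ne0$.

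To extend this to all points, I would argue as follows:
\begin{itemize}
\item $F$ vanishes to exactly second order on the diagonal, since $F_{xx}=1$ there. Hence $F\neq 0$ on a punctured neighborhood of the diagonal, and $W=1$ near it by continuity.
\item If $F$ vanishes on an open set, then $F=GG_{xy}-G_xG_y$ forces degeneration there, while $W=1$ near the diagonal. I would try a real-analyticity or unique-continuation argument for the determinantal PDE. Failing that, I would establish the theorem for the class where the zero set of $F$ has empty interior, which includes the positive case.
\end{itemize}

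Tameness is also needed for Proposition \ref{pr:constr1}. The main obstacle is proving that the 4$\times$4 Wronskian vanishes from the frieze equations alone. Differentiating the determinant identity does not obviously yield this. My first attempt would use the two-sided structure: the relation $F=GG_{xy}-G_xG_y$, with $G$ expressed through minors of $F$, should give a Desnanot–Jacobi-type identity expressing $W_3(F)$ times a minor in terms of $W_2$ quantities. If this does not close, I would use the boundary conditions: along each line $x=\mathrm{const}$, the function $y\mapsto F(x,y)$ should satisfy a fixed third-order ODE. That would give separability directly, with three functions, via \cite{GR}.

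Once $F=\G\cdot\D$ is available, Proposition \ref{prop:cldual}(1) gives $\D=\G^*$. Uniqueness of the representation up to $SL_3$ follows because $W_2=1$ makes $\{\G,\G_x,\G_{xx}\}$ a frame. So $\G$ is determined up to $A\in SL_3$, and the two constructions are inverse to each other.

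\textbf{Positivity.} The positivity statement is exactly Proposition \ref{prop:cldual}(2).
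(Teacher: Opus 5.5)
\textbf{Gap: the frieze-to-curve direction is not proved.} Your curve-to-frieze direction and the positivity statement follow the paper, whose proof of Theorem \ref{thm:equiv} simply assembles Lemma \ref{lm:til}, Proposition \ref{pr:constr1}, the duality lemmas and Proposition \ref{prop:cldual}. You also correctly see that this assembly silently uses two facts in the frieze-to-curve direction:
\begin{enumerate}
\item $W_2(F)\equiv 1$ for closed 2-friezes not assumed positive. Lemma \ref{lm:til}(2) cancels $F$ using positivity, but the first sentence of the theorem has no positivity hypothesis.
\item $F$ is tame, which is the hypothesis of the converse in Proposition \ref{pr:constr1}.
\end{enumerate}
Your proposal proves neither. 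For the first, you hope for a unique-continuation argument and otherwise fall back to friezes whose $F$ has a zero set with empty interior; that is a weaker theorem than the one stated. For the second, you list attempts, and the fallback (that $y\mapsto F(x,y)$ satisfies a fixed third-order ODE) is just tameness restated. So half of the bijection is unproved as written.

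\textbf{How to close it.} Both points close with the Desnanot--Jacobi idea you mention. Contrary to your remark, it is exactly differentiation of the determinant identity that does it. Differentiating $W_n(F)$ in $x$ (resp.\ $y$) only replaces its last column (resp.\ row) by the next one. Hence $W_n$, $\partial_xW_n$, $\partial_yW_n$, $\partial_x\partial_yW_n$ are the four minors of the matrix $(\partial_x^i\partial_y^jF)_{i,j=0}^{n+1}$ obtained by deleting one of its last two rows and one of its last two columns. Deleting both of the last two rows and columns gives $W_{n-1}$. The Desnanot--Jacobi identity then gives
\[
W_{n+1}(F)\,W_{n-1}(F)=W_n(F)\,\partial_x\partial_y W_n(F)-\partial_x W_n(F)\,\partial_y W_n(F),\qquad W_{-1}:=1,\quad W_0(F)=F .
\]
\begin{itemize}
\item For $n=1$ this reads $F\,W_2(F)=GG_{xy}-G_xG_y=F$.
\item The set $\{F=0\}$ has empty interior. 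Otherwise, since $F\not\equiv 0$ and the plane is connected, a boundary point $p$ of that interior is a limit of points where $F\neq 0$. Then $W_2(F)(p)=1$ by continuity, yet all derivatives of $F$ vanish at $p$, so $W_2(F)(p)=0$. Hence $W_2(F)\equiv 1$ with no positivity assumption.
\item For $n=2$ the right-hand side now vanishes identically, so $W_3(F)\,G\equiv 0$.
\item $G$ cannot vanish on an open set, since then so would $F=GG_{xy}-G_xG_y$.
\end{itemize}
Therefore $W_3(F)\equiv 0$ and $F$ is tame. The separability criterion then applies, and the rest of your argument goes through. One small step is still needed: the recovered $\G$ is $T$-periodic, because $F(x+T,y)=F(x,y)$ and $\D,\D_y,\D_{yy}$ form a frame.
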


\begin{example} \label{ex:conic}
{\rm Let the projective curve be a conic. Namely, let
$\g(x)=(a\cos x\mathpunct{:}b\sin x\mathpunct{:}1)$. Then
$$
\G(x)=(ab)^{-\frac13} (a\cos x, b\sin x, 1),\ \G^*(y)=(ab)^\frac13 \left(-\frac{\cos y}{a},-\frac{\sin y}{b},1\right),
$$
and 
$$
F(x,y)=\G(x)\cdot \G^*(y)=1-\cos(x-y) = G(x,y)
$$
with $T=2\pi$. We encountered this example earlier.
}
\end{example}

\begin{example} \label{ex:powers}
{\rm The next example satisfies only the upper boundary conditions, and the continuous 2-frieze is not periodic. We omit the somewhat tedious calculations and just state the result.

Let $a,b,c$ be a triple of numbers satisfying $a+b+c=3$, and let
$$
\G(x)=\left(\frac{x^a}{b-c}, \frac{x^b}{c-a}, \frac{x^c}{a-b} \right).
$$
One checks that $\det(\G,\G_x,\G_{xx})=1$. One has
$$
\G^*(y)=\left(\frac{(c-b)y^{2-a}}{(a-b)(c-a)}, \frac{(a-c)y^{2-b}}{(b-c)(a-b)}, \frac{(b-a)y^{2-c}}{(c-a)(b-c)} \right),
$$
hence 
$$
F(x,y)=\frac{x^a y^{2-a}}{(a-b)(a-c)} + \frac{x^b y^{2-b}}{(b-c)(b-a)} + \frac{x^c y^{2-c}}{(c-a)(c-b)}. 
$$
The quadratic case is when $(a,b,c)=(2,1,0)$, then 
$$
F(x,y)=\frac{x^2}{2}-xy+\frac{y^2}{2}.
$$
}
\end{example}

\paragraph{Self-dual closed continuous 2-friezes.} We ask when $F(x,y)=G(x,y)$ for all $x,y$. 

\begin{lemma}  \label{lm:sdual}
This happens if and only if the the projective curve $\g$ is a conic.
\end{lemma}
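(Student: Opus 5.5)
By Corollary \ref{cor:sym}, $G(x,y)=F(y,x)$, so the condition $F=G$ amounts to the symmetry $F(x,y)=F(y,x)$. Write $F(x,y)=\G(x)\cdot\G^*(y)$ as in Proposition \ref{prop:cldual}. The plan is to show that this symmetry holds exactly when the curve $\g$ is self-dual in the sense of a polarity. We then use the classical fact that a curve which is self-dual under a polarity, with the parametrization respected, is a conic.

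For the \emph{if} direction, I would use Example \ref{ex:conic}. Every conic is projectively equivalent to $\g(x)=(\cos x\mathpunct{:}\sin x\mathpunct{:}1)$. Theorem \ref{thm:equiv} says the 2-frieze depends only on the projective class of the parametrized curve, so we need to handle reparametrizations of this conic. I would avoid checking a general reparametrization $x=\phi(t)$ by hand. Instead I would argue invariantly: for a conic with the polarity given by its quadratic form $Q$, the dual curve is the image of $\g$ under $Q$. With the normalized lifts one expects $\G^*(x)=c\,Q\G(x)$, and the constant is fixed by $\det=1$, and it is consistent with $\det(\G,\G_x,\G_{xx})=1$ together with $Q(\G,\G)=0$. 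Then $F(x,y)=c\,Q(\G(x),\G(y))$ is symmetric because $Q$ is symmetric. I would verify $\G^*=cQ\G$ directly: $Q(\G,\G)=0$ gives $Q\G\cdot\G=0$ and, after differentiating, $Q\G\cdot\G_x=0$. Hence $Q\G$ is proportional to $\G\times\G_x$. The proportionality factor is constant because $\G_{xx}\cdot Q\G=-Q(\G_x,\G_x)$, and this is constant: differentiating $\det(\G,\G_x,\G_{xx})=1$ shows that $\G_{xxx}$ has no $\G_{xx}$-component, which forces $Q(\G_x,\G_x)'=2Q(\G_x,\G_{xx})=-Q(\G,\G_{xxx})=0$ (one checks these relations by differentiating $Q(\G,\G)=0$ repeatedly).

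For the \emph{only if} direction, suppose $F(x,y)=F(y,x)$. By Proposition \ref{pr:constr1}, $F$ has separable rank exactly $3$: $F(x,y)=\sum_{i=1}^3 u_i(x)v_i(y)$ with $u=\G$, $v=\G^*$, and each triple is linearly independent since the Wronskians equal $1$. Symmetry gives $\sum u_i(x)v_i(y)=\sum v_i(x)u_i(y)$. The span of the functions $x\mapsto F(x,y)$ is both $\mathrm{span}(u_i)$ and $\mathrm{span}(v_i)$, so there is a matrix $M\in GL_3$ with $\G^*(x)=M\G(x)$. Substituting back gives $\G(x)^TM^T\G(y)=\G(x)^TM\G(y)$, and since the $\G(x)$ span $\R^3$ this means $M=M^T$. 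Then $F(x,x)=0$ reads $\G(x)^TM\G(x)=0$. This says $\g$ lies on the conic $\{Q=0\}$ with $Q$ given by the nondegenerate symmetric matrix $M$, so $\g$ is a conic.

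The main obstacle I expect is the \emph{if} direction, specifically the bookkeeping of normalized lifts under arbitrary parametrizations of the conic. This is why I would use the invariant argument through $Q$ rather than an explicit computation.
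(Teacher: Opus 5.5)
Your proof is correct, and it argues differently from the paper in both directions.

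The paper makes the same reduction to $\G(x)\cdot\G^*(y)=\G(y)\cdot\G^*(x)$. It then sets $y=x+\eps$ and expands to fifth order, a computation it omits; the first discrepancy appears at order $\eps^5$ and is proportional to $h=v-\frac12 k'$. From $h\equiv 0$ (vanishing projective length element) it concludes via the classical characterization of conics. The converse is just Example \ref{ex:conic}.

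Your \emph{only if} argument is global linear algebra instead. Symmetry of the rank-$3$ separable kernel forces $\G^*=M\G$ with $M$ symmetric and invertible, i.e., a polarity carrying $\g$ to $\g^*$. Then $\G\cdot\G^*=0$ places $\g$ on the nondegenerate conic $\G^T M\G=0$. This is self-contained and avoids both the unwritten expansion and the appeal to projective length. The paper's local route, for its part, only needs the symmetry of $F$ to fifth order along the diagonal, and it ties the statement to the invariant $h$.

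Your invariant treatment of the \emph{if} direction is more complete than the paper's. $F$ depends on the parametrization: for $\g(s)=(\cos\phi(s)\mathpunct{:}\sin\phi(s)\mathpunct{:}1)$ one gets $F(s_1,s_2)=\frac{1-\cos(\phi(s_1)-\phi(s_2))}{\phi'(s_1)\phi'(s_2)}$. So Example \ref{ex:conic} by itself covers only one parametrization. Your identity $\G^*=\lambda^{-1}Q\G$, with constant $\lambda=-Q(\G_x,\G_x)\neq 0$ (nonzero since $Q$ is nondegenerate), handles all of them.

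One harmless slip: differentiating $Q(\G,\G_{xx})=-Q(\G_x,\G_x)$ gives $3Q(\G_x,\G_{xx})=-Q(\G,\G_{xxx})$, not $2Q(\G_x,\G_{xx})=-Q(\G,\G_{xxx})$. Since $Q(\G,\G_{xxx})=0$, your conclusion that $Q(\G_x,\G_x)$ is constant still holds.
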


\proof In one direction, this statement follows from Example \ref{ex:conic}. 

Assume that $F(x,y)=G(x,y)$ for all $x,y$. Then, according to Lemma \ref{lm:dual} and Proposition \ref{prop:cldual},  $\G(x)\cdot \G^*(y) = \G(y)\cdot \G^*(x)$ for  all $x,y$, that is,
\begin{equation} \label{eq:sdual}
\det (\G(x),\G(y),\G_y(y))=\det(\G(y),\G(x),\G_x(x)).
\end{equation}

Recall that one has $\G_{xxx}=k(x)\G_x(x)+v(x)\G(x)$ for some functions $k(x)$ and $v(x)$. Let $h(x)=v(x)-\frac12 k'(x)$. The differential 1-form $h(x)^{\frac{1}{3}} dx$ is called the projective length element on the projective curve. This 1-form vanishes if and only if the curve is a conic (see, e.g., \cite{GMO,OTb}).

Let $y=x+\eps$ in (\ref{eq:sdual}). A calculation up to the 5th order in $\eps$ (which we omit) reveals that $h(x)=0$ identically, and we are done. 
\proofend

\paragraph{Explicit formulas.} Closed continuous 2-friezes can be parameterized by a pair of periodic functions of one variable. 

Let $(f(x),g(x))$ be a closed locally convex curve in $\R^2 \subset \RP^2$. As we know, the lift is given by 
$\G(x)=W(x)^{-\frac{1}{3}} (f(x),g(x),1)$ where
$$
W(x)=\det \begin{pmatrix}
f'(x) & g'(x) \\
f''(x) & g''(x) 
\end{pmatrix} > 0.
$$
Then
\begin{equation*}
\begin{aligned}
F(x,y)&=\det (\G(x),\G(y),\G_y(y))\\
&= W(x)^{-\frac{1}{3}} W(y)^{-\frac{2}{3}} 
\left[ 
\det \begin{pmatrix}
f(y) & g(y) \\
f'(y) & g'(y) 
\end{pmatrix} -
\det \begin{pmatrix}
f(x) & g(x) \\
f'(y) & g'(y) 
\end{pmatrix} 
         \right].
\end{aligned}
\end{equation*}
Also, by Corollary \ref{cor:sym}, one has $G(x,y)=F(y,x)$.

\paragraph{From continuous 2-friezes to continuous friezes.} Recall the notion of {\it projective curvature} of a locally convex curve in $\RP^2$, see \cite{OTb}, Section 1.4. 

As we know, a (projective equivalence class of a) projective curve $\g(x)$  corresponds to a 3rd-order differential operator $d^3+q(x)d+r(x)$. Assign to it the 2nd-order differential operator $d^2+\frac{1}{4}q(x)$. This operator corresponds to a projective equivalence class of a curve $\Psi(x)$ in $\R^2$ satisfying $\Psi''(x)=-\frac{1}{4} q(x) \Psi(x)$. The projection of this curve to $\RP^1$ is a parameterized curve $\psi(x)$ therein, that is, a projective structure on $\RP^1$. The correspondence
\begin{equation} \label{eq:difop3to2}
d^3+q(x)d+r(x)\  \ \longmapsto\  \ d^2+\frac{1}{4}q(x)
\end{equation}
is independent of the choice of the parameter $x$, i.e., it commutes with the action of the group of diffeomorphisms on differential operators, see Theorem 1.4.3 in \cite{OTb}.

Let us describe this correspondence at the level of continuous friezes. Start with a curve $\G(x) \subset \R^3$ satisfying $\det(\G,\G',\G'')=1$ and define a continuous $SL_3$-tiling by 
\begin{equation} \label{eq:FG}
F(x,y)= \G(x) \cdot \G^*(y) = \det (\G(x),\G(y),\G_y(y))
\end{equation} 
(see  Proposition \ref{prop:cldual}).
The curve $\G$ satisfies the differential equation 
\begin{equation} \label{eq:Gd}
\G'''(x)+q(x)\G'(x)+r(x)\G(x)=0.
\end{equation}
Hence
$$
\det(\G,\G'',\G''')=-q,\ \det(\G',\G'',\G''')=r.
$$
Differentiating (\ref{eq:FG}) and combining with (\ref{eq:Gd}), yields
\begin{equation} \label{eq:Fd}
F_{xxx}+q F_x+rF=0.
\end{equation}
On the other hand,  since 
\begin{equation*} 
\det \begin{pmatrix}
F & F_x & F_{xx}\\
F_y & F_{xy} & F_{xxy}\\
F_{yy} & F_{xyy} & F_{xxyy}\\
\end{pmatrix}
=1,
\end{equation*}
one has
\begin{equation*} 
\det \begin{pmatrix}
F & F_x & F_{xxx}\\
F_y & F_{xy} & F_{xxxy}\\
F_{yy} & F_{xyy} & F_{xxxyy}
\end{pmatrix}
=0.
\end{equation*}
Hence the columns of this matrix satisfy the same relation (\ref{eq:Fd}), that is, we may take $\G(x)=(F(x,y),F_y(x,y),F_{yy}(x,y))$ with a fixed value of $y$. Solving the equations, linear in $q$ and $r$,
$$
F_{xxx}+qF_x+rF= F_{xxxy}+qF_{xy}+rF_y=0
$$
yields
$$
-q=\frac{\det \begin{pmatrix}
F & F_{xxx}\\
F_y & F_{xxxy}
\end{pmatrix}}
{\det \begin{pmatrix}
F & F_{x}\\
F_y & F_{xy}
\end{pmatrix}}.
$$ 

A similar consideration applies to a continuous frieze $H(x,y)$. We have a curve $\Psi(x) \subset \R^2$, satisfying the differential equation $\Psi''(x)+\frac{1}{4}q(x)\Psi(x)=0$. Hence 
$$
\det(\Psi',\Psi'')=-\frac{1}{4}q(x).
$$
One has $H(x,y)=\det(\Psi(x),\Psi(y))$ and $\Psi(x)=(H(x,y),H_y(x,y))$ with fixed $y$. Therefore
$$
q=-4 \frac{H_{xx}}{H}.
$$

We arrive at

\begin{lemma} \label{lm:frfrom2fr}
The  continuous frieze corresponding to the continuous $SL_3$-tiling via the correspondence $(\ref{eq:difop3to2})$
is given by the differential equation on $H(x,y)$:
$$
\frac{H_{xx}}{H} = \frac{1}{4} \frac{\det \begin{pmatrix}
F & F_{xxx}\\
F_y & F_{xxxy}
\end{pmatrix}}
{\det \begin{pmatrix}
F & F_{x}\\
F_y & F_{xy}
\end{pmatrix}}.
$$
\end{lemma}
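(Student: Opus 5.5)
The plan is to recover the potential $q$ from $F$ and from $H$ separately, then equate the two expressions using the correspondence (\ref{eq:difop3to2}).

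Start with a continuous $SL_3$-tiling of the form (\ref{eq:FG}), $F(x,y)=\det(\G(x),\G(y),\G_y(y))$, where $\G$ satisfies (\ref{eq:Gd}). For fixed $y$, the function $x\mapsto F(x,y)$ is a linear functional applied to $\G(x)$, so it satisfies (\ref{eq:Fd}). Differentiating in $y$ shows that $F_y$ satisfies the same equation.

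Next, view
\[
F_{xxx}+qF_x+rF=0,\qquad F_{xxxy}+qF_{xy}+rF_y=0
\]
as a $2\times 2$ linear system in the unknowns $q,r$. Its determinant is $\det\begin{pmatrix}F&F_x\\ F_y&F_{xy}\end{pmatrix}=G$. Cramer's rule then gives
\[
-q=\det\begin{pmatrix}F&F_{xxx}\\ F_y&F_{xxxy}\end{pmatrix}\Big/ G .
\]
This is valid wherever $G\neq 0$. In the closed positive case that holds for $x<y<x+T$; elsewhere the identity extends by continuity and analyticity in $y$. Since $q$ depends only on $x$, the right-hand side is independent of $y$, which is consistent.

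On the frieze side, apply (\ref{eq:difop3to2}). The operator $d^2+\frac14 q$ corresponds to $\Psi$ with $\Psi''=-\frac14 q\Psi$ and $\det(\Psi,\Psi')=1$, and the associated continuous frieze is $H(x,y)=\det(\Psi(x),\Psi(y))$ by Theorem~\ref{thm:cfrcurve}. Differentiating twice in $x$ gives $H_{xx}=-\frac14 q H$, so $q=-4H_{xx}/H$. Equating the two expressions for $q$ yields the formula in the statement.

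Conversely, I would check that the equation determines $H$. The right-hand side is a function $-\frac14 q(x)$, so for each fixed $y$ the equation reads $H_{xx}+\frac14 qH=0$. Combined with the frieze boundary conditions $H(y,y)=0$ and $H_x(y,y)=-1$ from Definition~\ref{def:cfrieze}, this pins down $H(\cdot,y)$ uniquely, namely $H=\det(\Psi(x),\Psi(y))$.

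The main subtle point is the division by $G$, which I handle as described above. The rest is bookkeeping of signs in Cramer's rule.
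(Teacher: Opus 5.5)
Your proposal is correct and is essentially the paper's argument: you get $-q=\det\begin{pmatrix}F&F_{xxx}\\ F_y&F_{xxxy}\end{pmatrix}\big/\det\begin{pmatrix}F&F_x\\ F_y&F_{xy}\end{pmatrix}$ by Cramer's rule from the equations for $F$ and $F_y$, get $q=-4H_{xx}/H$ from $\Psi''=-\frac14 q\Psi$, and equate the two. One quibble: the appeal to analyticity in $y$ is neither available (the functions are only smooth) nor needed, since $-q\,G=\det\begin{pmatrix}F&F_{xxx}\\ F_y&F_{xxxy}\end{pmatrix}$ holds identically and the ratio is only meaningful where $G\neq 0$; your uniqueness check via $H(y,y)=0$, $H_x(y,y)=-1$ is a correct addition the paper does not include.
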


Let us illustrate this by revising Example \ref{ex:powers}.

\begin{example} \label{ex:posers2}
{\rm  Recall that
$$
F(x,y)=\frac{x^a y^{2-a}}{(a-b)(a-c)} + \frac{x^b y^{2-b}}{(b-c)(b-a)} + \frac{x^c y^{2-c}}{(c-a)(c-b)}. 
$$
and that 
$$
\G(x)=\left(\frac{x^a}{b-c}, \frac{x^b}{c-a}, \frac{x^c}{a-b} \right),
$$
where $a+b+c=3$, and let us assume that $ab+bc+ca > 1$ (this holds in the quadratic case $(2,1,0)$). Then a calculation, that we omit, yields
$$
-q=\det(\G,\G'',\G''') = \frac{ab+bc+ca-2}{x^2},
$$
hence
$$
\frac{H_{xx}}{H} = \frac{ab+bc+ca-2}{4 x^2}.
$$
The general solution of this differential equation is
$$
H(x,y)=C_1(y) x^{\alpha_1} + C_2(y) x^{\alpha_2},
$$
where 
$$
\alpha_2=\frac{1 + \sqrt{ab+bc+ca-1}}{2},\ \alpha_1=\frac{1 - \sqrt{ab+bc+ca-1}}{2},\ \ {\rm and} \ \ \alpha_1+\alpha_2=1.
$$

Consider the curve $\psi(x)=(x^{\alpha_1}:x^{\alpha_2}) \subset \RP^1$ and its lift 
$$
\Psi(x)=\frac{1}{\sqrt{\alpha_2-\alpha_1}} (x^{\alpha_1},x^{\alpha_2})
$$ 
to $\R^2$, satisfying $\det(\Psi,\Psi')=1$. Let 
$$
\Phi(y) = \sqrt{\alpha_2-\alpha_1} (-C_2(y),C_1(y))
$$
be another such curve, satisfying $\det(\Phi,\Phi')=1$. Then $H(x,y) = \det (\Psi(x),\Phi(y))$, and the Ptolemy-Pl\"ucker relation
$$
\det(\Psi,\Phi) \det(\Psi',\Phi') - \det(\Psi',\Phi) \det(\Psi,\Phi') = \det(\Psi,\Psi') \det(\Phi,\Phi') =1
$$
on the four vectors $\Psi(x), \Psi'(x), \Phi(y), \Phi'(y)$ implies that 
$$
\det \begin{pmatrix}
H & H_{x}\\
H_y & H_{xy}
\end{pmatrix} =1.
$$

If, in addition, $\Psi=\Phi$, then $H(x,x)=0, H_y(x,x)=1, H_x(x,x)=-1$, which are the upper boundary conditions on this continuous frieze pattern
$$
H(x,y)=\frac{x^{\alpha_1}y^{\alpha_2} - x^{\alpha_2}y^{\alpha_1}}{\alpha_2-\alpha_1}.
$$
}
\end{example}

\section{Symplectic form}  \label{sect:symp}

\paragraph{Symplectic structure on the space of projective curves.} Let us ``unwrap" the formula given in Section 7 of \cite{GO} (also see \cite{Ov}) for the symplectic structure on the space of closes convex projective curves, associated with the Adler-Gelfand-Dikii Poisson bracket on the space of differential operators  \cite{Ad,GD}. This formula is based on the Kupershmidt-Wilson theorem \cite{KW}, describing the ADG Poisson bracket; we use its presentation in Section 10.7 of \cite{BBT}. 
 
Let $\g \subset \RP^2$ be a convex closed projective curve, $\G \subset \R^3$ its lift, and $L$ the associated 3rd order disconjugate differential operator. Let $y_1(x),y_2(x),y_3(x)$ be a basis of solutions of the equation $L(y)=0$ such that  
$$
y_1>0,\ W=\det \begin{pmatrix}
y_1 & y_2\\
y_1' & y_2'
\end{pmatrix} > 0,
$$
where prime is $d/dx$. That such a basis exists follows from disconjugacy of $L$. We think of $y_1,y_2,y_3$ as the components of the curve $\G$.

Let $u(x)=(u_1,u_2,u_3)$ be a vector field along $\G(x)$, an infinitesimal deformation of this curve. To factor out the action of $SL_3$, we  assume that $u(0)=u'(0)=u''(0)=0$. Denote by $u(y_1)$ and $u(W)$ the directional derivatives of $y_1$ and of $W$ along $u$.  

Let\footnote{The matrix involved in the description of the ADG bracket is  
$$
\begin{pmatrix}
2 & -1\\
-1 & 2
\end{pmatrix},
$$
which is, up to a factor, the inverse of $A$. We take the inverse because we consider the symplectic structure, rather than the Poisson bracket.
}

$$
A=\begin{pmatrix}
2 & 1\\
1 & 2
\end{pmatrix},
$$
and let $u(x), v(x)$ be two vector fields along $\G$, considered as tangent vectors to the space of equivalent classes of projective curves at point $\g$. Set
$$
a_1= -\frac{y_1'}{y_1},\ a_2=\frac{y_1'}{y_1} - \frac{W'}{W}.
$$
Then, up to a multiplicative constant, the formula for the symplectic structure on the space of closes convex projective curves reads
\begin{equation*} \label{eq:GO}
\omega(u,v)= \int_{S^1} (d^{-1} u(a_1),d^{-1} u(a_2)) A\, (v(a_1),v(a_2))^T\ dx
\end{equation*}
(the $n=3$ case of Theorem 7.3 in \cite{GO}).\footnote{Correcting a sign error in this formula.}

Let us calculate $u(y_1'/y_1)$ and $u(W'/W)$. 

\begin{lemma} \label{lm:dirder}
One has
$$
u\left(\frac{y_1'}{y_1}\right)=\left(\frac{u_1}{y_1}\right)',\ 
u\left(\frac{W'}{W}\right)= \left(\frac{\det\begin{pmatrix}
u_1 & u_2\\
y_1' & y_2'
\end{pmatrix} -
\det \begin{pmatrix}
u_1' & u_2'\\
y_1 & y_2
\end{pmatrix}}{W}\right)'.
$$
\end{lemma}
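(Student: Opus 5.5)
The directional derivative $u$ is a derivation that commutes with $d/dx$, and $u(y_i)=u_i$, $u(y_i')=u_i'$. The plan is to write both quantities as logarithmic derivatives, so that only the variation of the logarithm itself needs to be computed.

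For the first formula, note that $y_1'/y_1=(\ln y_1)'$. Since $u$ commutes with differentiation, $u\left(y_1'/y_1\right)=\left(u(\ln y_1)\right)'=\left(u_1/y_1\right)'$. This is legitimate because $y_1>0$.

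The second formula works the same way. Write $W'/W=(\ln W)'$, which makes sense since $W>0$. Then $u(W'/W)=\left(u(W)/W\right)'$, so it remains to compute $u(W)$. The Wronskian is bilinear in the two rows, so by the Leibniz rule
$$
u(W)=\det\begin{pmatrix} u_1 & u_2\\ y_1' & y_2'\end{pmatrix}+\det\begin{pmatrix} y_1 & y_2\\ u_1' & u_2'\end{pmatrix}.
$$
Swapping the rows of the second determinant changes its sign, which gives exactly $-\det\begin{pmatrix} u_1' & u_2'\\ y_1 & y_2\end{pmatrix}$. Dividing by $W$ and differentiating then yields the stated expression.

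There is no real obstacle. The only point needing care is justifying that $u$ commutes with $d/dx$: $u$ is the derivative at $t=0$ of a one-parameter family of curves $\G_t(x)$ with $\partial_t\G_t|_{t=0}=u$, and mixed partials commute. The rest is sign bookkeeping in the row swap.
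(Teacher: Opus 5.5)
Your proposal is correct and is essentially the paper's argument. The only differences are cosmetic: the paper uses the quotient rule rather than writing $y_1'/y_1$ and $W'/W$ as logarithmic derivatives, and it verifies $u(W')=(u(W))'$ by expanding both sides explicitly instead of invoking the general commutation of $u$ with $d/dx$. Your row-by-row variation giving $u(W)$ is the same computation the paper does.
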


\proof We have
$$
u\left(\frac{y_1'}{y_1}\right)= \frac{u(y_1')y_1-u(y_1)y_1'}{y_1^2}=\frac{u_1'y_1-u_1y_1'}{y_1^2}=\left(\frac{u_1}{y_1}\right)',
$$
which proves the first formula.

For the second one, we have
\begin{equation*}
\begin{aligned}
&u(W)=\det\begin{pmatrix}
u_1 & u_2\\
y_1' & y_2'
\end{pmatrix} -
\det\begin{pmatrix}
u_1' & u_2'\\
y_1 & y_2
\end{pmatrix},\\
&u(W') = \det\begin{pmatrix}
u_1 & u_2\\
y_1'' & y_2''
\end{pmatrix} -
\det\begin{pmatrix}
u_1'' & u_2''\\
y_1 & y_2
\end{pmatrix}=
(u(W))',
\end{aligned}
\end{equation*}
hence
\begin{equation*}
\begin{aligned}
u\left(\frac{W'}{W}\right)&=\frac{u(W') W - u(W) W'}{W^2} = \left(\frac{u(W)}{W}\right)' \\
&= 
\left(\frac{\det\begin{pmatrix}
u_1 & u_2\\
y_1' & y_2'
\end{pmatrix} -
\det \begin{pmatrix}
u_1' & u_2'\\
y_1 & y_2
\end{pmatrix}}{W}\right)',
\end{aligned}
\end{equation*}
as claimed.
\proofend

Let us summarize this calculation. Set
\begin{equation*}
\begin{aligned}
&A_u := u(\ln y_1)= \frac{u_1}{y_1},\\
 &B_u := u(\ln\left(\det \begin{pmatrix}
y_1 & y_2\\
y_1' & y_2'
\end{pmatrix}  \right)) = 
\frac{\det\begin{pmatrix}
u_1 & u_2\\
y_1' & y_2'
\end{pmatrix} -
\det \begin{pmatrix}
u_1' & u_2'\\
y_1 & y_2
\end{pmatrix}}{\det \begin{pmatrix}
y_1 & y_2\\
y_1' & y_2'
\end{pmatrix}},
\end{aligned}
\end{equation*}
and similarly for $A_v, B_v$. We obtain

\begin{lemma} \label{lm:om}
One has
$$
\omega(u,v)= \int_{S^1} (2A_uA_v' -  B_u A_v' -  A_u B_v' + 2 B_u B_v')\ dx.
$$
\end{lemma}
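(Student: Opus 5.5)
The plan is to substitute the formulas of Lemma \ref{lm:dirder} into the Gelfand--Ovsienko expression for $\omega(u,v)$ and then simplify. By Lemma \ref{lm:dirder} and the definitions of $A_u,B_u$, the variations of the two ``Miura'' variables are total derivatives:
$$
u(a_1)=-A_u',\qquad u(a_2)=A_u'-B_u' .
$$
Applying $d^{-1}$ therefore gives $d^{-1}u(a_1)=-A_u$ and $d^{-1}u(a_2)=A_u-B_u$, and the same formulas hold for $v$.

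The one point needing care is whether $d^{-1}$ introduces constants of integration. Here I would use the normalization $u(0)=u'(0)=u''(0)=0$, which gives $A_u(0)=B_u(0)=0$. So $d^{-1}$, taken as the antiderivative vanishing at $0$, returns exactly $-A_u$ and $A_u-B_u$ with no extra constants. Periodicity of $A_u,B_u$ follows because $u$, $y_1$, $y_2$ and $W$ are all periodic, since the solutions are periodic. Hence every integrand below is a genuine function on $S^1$.

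Next I would insert these into the quadratic form given by $A$:
$$
\omega(u,v)=\int_{S^1}(-A_u,\;A_u-B_u)
\begin{pmatrix}2&1\\1&2\end{pmatrix}
(-A_v',\;A_v'-B_v')^T\,dx .
$$
Expanding the integrand gives
$$
2A_uA_v' - A_u(A_v'-B_v') - (A_u-B_u)A_v' + 2(A_u-B_u)(A_v'-B_v').
$$
Collecting terms, the coefficient of $A_uA_v'$ is $2-1-1+2=2$. The coefficient of $A_uB_v'$ is $1-2=-1$, and that of $B_uA_v'$ is $1-2=-1$. The coefficient of $B_uB_v'$ is $2$. This is exactly the integrand $2A_uA_v'-B_uA_v'-A_uB_v'+2B_uB_v'$ in the statement. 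The claim holds up to the overall multiplicative constant, which is already fixed only up to a constant in the formula for $\omega$.

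The main obstacle is bookkeeping rather than anything conceptual. One has to get the signs in $a_1,a_2$ right and confirm the vanishing of the integration constants. If the constant term were not zero, I would check that any constant contributes an exact term $c\int (\cdot)'\,dx=0$ on $S^1$, because $v(a_i)$ is a total derivative of a periodic function. So the result does not depend on the choice of $d^{-1}$ in either case.
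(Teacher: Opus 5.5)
Your proposal is correct and follows the same route as the paper: you substitute Lemma~\ref{lm:dirder} into the Gelfand--Ovsienko formula, obtaining $d^{-1}u(a_1)=-A_u$ and $d^{-1}u(a_2)=A_u-B_u$, and then expand the quadratic form given by $A$. The coefficient bookkeeping is right. Your remark that any constant of integration contributes only an exact term, which integrates to zero on $S^1$, makes explicit a detail the paper leaves unstated.
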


Next, let us relate the functions $y_1, y_2$ to the closed continuous 2-frieze $F(x,y) = \det(\G(x), \G(y), \G'(y))$. Fix $y=0$ and assume (acting by $SL_3$ if need be) that 
$$
\G(0)=(0,0,1),\ \G'(0)=(0,-1,0),\ \G''(0)=(1,0,0).
$$
Then
$$
F(x,0)=y_1,\ G(x,0)=\det(\G(x),\G'(x),\G(0))=\det \begin{pmatrix}
y_1 & y_2\\
y_1' & y_2'
\end{pmatrix} = W,
$$
hence $F_x(x,0)=y_1', G_x(x,0)=W'$.

Therefore 
$$
A_u=\frac{dF}{F}(u), A_u'=\frac{FdF_x-F_xdF}{F^2}(u), 
B_u=\frac{dG}{G}(u), B_u'=\frac{GdG_x-G_xdG}{G^2}(u),
$$
and likewise for $v$. This implies  
\begin{equation} \label{eq:om}
\begin{aligned}
\omega(u,v)&= \int_{S^1}  (A_uA_v'-A_vA_u')+(A_u'B_v-A_v'B_u)+(B_uB_v'-B_u'B_v)  \ dx\\
&=\int_{S^1}  \frac{dF}{F} \wedge \left(\frac{dF_x}{F} -\frac{F_xdF}{F^2}  \right)
+  \left(\frac{dF_x}{F} -\frac{F_xdF}{F^2}  \right) \wedge \frac{dG}{G} \\
&+ \frac{dG}{G} \wedge \left( \frac{dG_x}{G} - \frac{G_xdG}{G^2}   \right)  \ dx\\
&= \int_{S^1} \frac{dF\wedge dF_x }{F^2} 
+ \left( \frac{dF_x\wedge dG}{FG}-\frac{F_x dF \wedge dG}{F^2G} \right) + \frac{dG \wedge dG_x}{G^2}\ dx.
\end{aligned}
\end{equation}

\paragraph{Cluster symplectic structure on 2-friezes.} 
Choose, as cluster coordinates, two adjacent South-East diagonals whose entries are $x_i$ and $y_i$. The related closed 2-form is
\begin{equation} \label{eq:form}
\omega = \sum \left( \frac{dx_{i+1}\wedge dx_i}{x_ix_{i+1}} + \frac{dy_{i+1}\wedge dy_i}{y_iy_{i+1}}
+  \frac{dx_{i}\wedge dy_i}{x_iy_i} +  \frac{dy_{i}\wedge dx_{i+1}}{x_{i+1}y_i}\right),
\end{equation}
see Sections 5.3 and 5.5 of \cite{MOT}. 

Now we take continuous limit of (\ref{eq:form} in which the sum is replaced by an integral. Using the same lattice as before, with coordinates step of  $2\eps$, we replace the variable as follows:
$$
x_i \mapsto F(x-\eps,y-\eps), x_{i+1} \mapsto F(x+\eps,y-\eps), y_i \mapsto G(x,y), y_{i+1} \mapsto G(x+2\eps,y).
$$
Under this substitution, the constant term in (\ref{eq:form}) vanishes. Calculating modulo $\eps^2$, we find  the linear in $\eps$ terms: 
\begin{equation*}
\begin{aligned}
\frac{dx_{i+1}\wedge dx_i}{x_ix_{i+1}} \mapsto \frac{(dF + \eps(dF_x-dF_y)\wedge (dF - \eps(dF_x+dF_y)}{F(x-\eps,y-\eps)F(x+\eps,y-\eps)} &\mapsto 
2\frac{dF_x \wedge dF}{F^2},\\ 
\frac{dy_{i+1}\wedge dy_i}{y_iy_{i+1}} \mapsto \frac{(dG +2\eps dG_x)\wedge dG}{G(x,y)G(x+2\eps,y)} &\mapsto  2\frac{dG_x \wedge dG}{G^2},\\
\frac{dx_{i}\wedge dy_i}{x_iy_i} +  \frac{dy_{i}\wedge dx_{i+1}}{x_{i+1}y_i} \mapsto 
\frac{(dF-\eps(dF_x+dF_y))\wedge dG}{F-\eps(F_x+F_y)G} &\\
+ \frac{dG\wedge (dF + \eps(dF_f-dF_y)}{F + \eps(F_x-F_y)G} 
\mapsto 2 \frac{dG \wedge dF_x}{FG} &+ 2 \frac{F_x dF \wedge dG}{F^2G}.
\end{aligned}
\end{equation*}
Thus (\ref{eq:form}) becomes   (\ref{eq:om}), multiplied by a factor of $-2$. We have proved

\begin{theorem} \label{thm:clim}
Up to a multiplicative factor, the continuous limit of the cluster symplectic form on 2-friezes is the Adler-Gelfand-Dikii symplectic form on the modulo space of closed convex projective curves.
\end{theorem}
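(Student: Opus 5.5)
The proof will compare two explicit expressions for the same 2-form and match them term by term. On the continuous side, I start from the Adler--Gelfand--Dikii form in the $n=3$ case of the Kupershmidt--Wilson description, written in terms of $a_1,a_2$. Lemma \ref{lm:dirder} shows that the variations $u(a_1)$ and $u(a_2)$ are total derivatives. It follows that $d^{-1}u(a_1)=-A_u$ and $d^{-1}u(a_2)=A_u-B_u$; here the normalization $u(0)=u'(0)=u''(0)=0$ makes the choice of antiderivative unambiguous.

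Substituting these into $\omega$ with the matrix $A$ gives the expression of Lemma \ref{lm:om}. I then antisymmetrize by integrating by parts on $S^1$, using $\int A_uB_v'=-\int A_u'B_v$, to arrive at the first line of (\ref{eq:om}).

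Next I pass to frieze variables. I normalize $\G(0),\G'(0),\G''(0)$ by the $SL_3$-action. Then $y_1=F(x,0)$ and $W=G(x,0)$, so $A_u=dF/F$ and $B_u=dG/G$, viewed as 1-forms on the space of curves. This turns (\ref{eq:om}) into the wedge-product expression in $F,F_x,G,G_x$.

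On the discrete side, I take the cluster presymplectic form (\ref{eq:form}) on two adjacent SE diagonals. I substitute the lattice values of $F$ and $G$ with step $2\eps$ and expand. The zeroth-order terms cancel pairwise, since each term becomes $dF\wedge dF/F^2$, $dG\wedge dG/G^2$, or a pair $\pm dF\wedge dG/(FG)$ that cancels. I then extract the $O(\eps)$ term of each summand. The sum over $i$ becomes $\frac{1}{2\eps}\int\,dx$ along the diagonal, so the $O(\eps)$ coefficients produce an integral.

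The main obstacle is this first-order expansion, in particular the mixed $x_i,y_i$ terms. There the denominators must be expanded as well: the $F_x\,dF\wedge dG/(F^2G)$ term comes from expanding $1/(F\pm\eps(\dots))$. The $F_y$ contributions must also cancel between the two mixed summands, so that only $x$-derivatives survive, as in (\ref{eq:om}). I would organize this by writing every shift as $F+\eps\,\delta F$ and collecting the coefficients of $\eps$. I would check that the $dF_y$ and $F_y$ pieces appear with opposite signs in the two mixed terms.

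After that, each of the three groups matches the corresponding group in (\ref{eq:om}) up to a common factor $-2$, and this proves the theorem up to a multiplicative constant.
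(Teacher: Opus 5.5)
Your proposal is correct and follows the paper's proof essentially step for step: Lemmas \ref{lm:dirder} and \ref{lm:om}, antisymmetrization by parts, the $SL_3$-normalization at $0$ giving $A_u=\frac{dF}{F}(u)$ and $B_u=\frac{dG}{G}(u)$, and the first-order expansion of (\ref{eq:form}) matched against (\ref{eq:om}) up to the factor $-2$. Your explicit checks that the zeroth-order terms cancel and that the $F_y$ contributions drop out of the mixed terms fill in what the paper leaves implicit; the second check is cleanest if you write the mixed terms as $d\ln y_i\wedge d(\ln x_{i+1}-\ln x_i)$. One small correction: the additive constant in $d^{-1}$ is irrelevant on $S^1$, since it integrates to zero against the total derivatives $v(a_j)$, so the normalization $u(0)=u'(0)=u''(0)=0$ is really needed to make $F(x,0)=y_1$ and $G(x,0)=W$ hold along the deformation.
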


\end{document}